\newtheorem{theorem}{Theorem}
\newtheorem*{theorem*}{Theorem}
\newtheorem{proposition}{Proposition}
\newtheorem{lemma}{Lemma}
\theoremstyle{remark}
\newtheorem{remark}{Remark}
\newtheorem{example}{Example}
\newtheorem{conjecture}{Conjecture}
\newtheorem{assum}{Assumption}
\newcommand{\HH}{\text{HH}}
\newcommand{\fr}{\text{Fr}}
\newcommand{\id}{\text{Id}}
\newcommand{\sym}{\text{Sym}}
\newcommand{\Aut}{\text{Aut}}
\newcommand{\spec}{\text{Spec}}
\newcommand{\gr}{\text{gr}}
\title{On automorphisms of enveloping algebras}
\author{Akaki Tikaradze}
\email{ tikar06@gmail.com}
\address{University of Toledo, Department of Mathematics \& Statistics, 
Toledo, OH 43606, USA}
\date{\today}
\begin{document}

\begin{abstract}

Given an algebraic Lie algebra $\mathfrak{g}$ over $\mathbb{C}$, we canonically associate to it
a Lie algebra $\mathcal{L}_{\infty}(\mathfrak{g})$ defined over $\mathbb{C}_{\infty}$--the reduction of $\mathbb{C}$ 
modulo the  infinitely
large prime, and show that  for a class of Lie algebras $\mathcal{L}_{\infty}(\mathfrak{g})$ 
is an invariant of the derived category of $\mathfrak{g}$-modules.
We give two applications of this construction. First, we show that the bounded derived category of $\mathfrak{g}$-modules
determines algebra $\mathfrak{g}$ for a class of Lie algebras. Second,
given a semi-simple Lie algebra $\mathfrak{g}$ over $\mathbb{C}$, we construct a canonical homomorphism 
from the group of automorphisms of the enveloping algebra $\mathfrak{U}\mathfrak{g}$ 
to the group of Lie algebra automorphisms of $\mathfrak{g}$,
such that its kernel does not contain a nontrivial semi-simple automorphism. 
As a corollary we obtain that any finite subgroup of automorphisms of $\mathfrak{U}\mathfrak{g}$ 
 isomorphic to a subgroup of Lie algebra automorphisms of $\mathfrak{g}.$

\end{abstract}
\maketitle

\section*{Introduction}

This paper is motivated by the question whether a Lie algebra $ \mathfrak{g}$ over $\mathbb{C}$
can be recovered from its enveloping algebra $\mathfrak{U}\mathfrak{g}.$ One way to make this question
more precise is to state the isomorphism problem for enveloping algebras: given two finite dimensional
Lie algebras $\mathfrak{g}_1,\mathfrak{g}_2$ over $\mathbb{C},$ such that their enveloping algebras are isomorphic 
$\mathfrak{U}\mathfrak{g}_1\cong \mathfrak{U}\mathfrak{g}_2$, does it follow that $\mathfrak{g}_1\cong\mathfrak{g}_2?$
This problem is widely open in general, it is known to have the positive answer for the cases of semi-simple Lie algebras (easily follows from\cite{AP}) and low dimensional nilpotent Lie algebras (see \cite{H}).

 One is tempted to
upgrade this isomorphism problem to the following derived isomorphism problem.

\begin{conjecture}
Let $\mathfrak{g}_1, \mathfrak{g}_2$ be finite dimensional Lie algebras over $\mathbb{C}.$
If the derived categories of bounded complexes of  $\mathfrak{U}\mathfrak{g_1}$-modules and $ \mathfrak{U}\mathfrak{g_2}$-modules are equivalent,
then $\mathfrak{g_1}\cong \mathfrak{g}_2.$
\end{conjecture}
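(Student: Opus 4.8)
The plan is to route the conjecture through the invariant $\mathcal{L}_{\infty}$ announced in the abstract, in two independent steps. First, show that any triangulated equivalence $D^b(\mathfrak{U}\mathfrak{g}_1\text{-mod})\simeq D^b(\mathfrak{U}\mathfrak{g}_2\text{-mod})$ forces an isomorphism $\mathcal{L}_{\infty}(\mathfrak{g}_1)\cong\mathcal{L}_{\infty}(\mathfrak{g}_2)$; second, recover the original Lie algebra over $\mathbb{C}$ from its reduction. The second step I expect to be the soft one, and it should work for \emph{all} finite-dimensional $\mathfrak{g}$, algebraic or not. Granting that, up to the canonical identifications, $\mathcal{L}_{\infty}(\mathfrak{g})$ has underlying $\mathbb{C}_{\infty}$-Lie algebra $\mathfrak{g}\otimes_{\mathbb{C}}\mathbb{C}_{\infty}$, an isomorphism $\mathcal{L}_{\infty}(\mathfrak{g}_1)\cong\mathcal{L}_{\infty}(\mathfrak{g}_2)$ yields $\mathfrak{g}_1\otimes_{\mathbb{C}}\mathbb{C}_{\infty}\cong\mathfrak{g}_2\otimes_{\mathbb{C}}\mathbb{C}_{\infty}$. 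Since $\dim\mathfrak{g}_1=\dim\mathfrak{g}_2=:n$ (the common dimension being read off from the invariant), the isomorphisms form a locally closed subscheme $\underline{\mathrm{Isom}}(\mathfrak{g}_1,\mathfrak{g}_2)\subset GL_n$ cut out over $\mathbb{C}$ by the quadratic equations $\phi[x,y]=[\phi x,\phi y]$. It acquires a point over the extension field $\mathbb{C}_{\infty}$, hence is nonempty; as $\mathbb{C}$ is algebraically closed, a nonempty finite-type $\mathbb{C}$-scheme has a $\mathbb{C}$-point, giving $\mathfrak{g}_1\cong\mathfrak{g}_2$. This descent is insensitive to algebraicity, so it imposes no restriction on the class of Lie algebras.

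The substance is the first step, and to make $\mathcal{L}_{\infty}$ visible to the derived category I would pass to positive characteristic. Realizing the equivalence by a two-sided tilting complex and spreading it, together with suitable orders of $\mathfrak{U}\mathfrak{g}_1,\mathfrak{U}\mathfrak{g}_2$, over a finitely generated subring $R\subset\mathbb{C}$, then reducing modulo a maximal ideal $\mathfrak{p}\subset R$ of residue characteristic $p$, produces for almost all $\mathfrak{p}$ a derived equivalence $D^b(\mathfrak{U}(\mathfrak{g}_1\otimes k_{\mathfrak{p}}))\simeq D^b(\mathfrak{U}(\mathfrak{g}_2\otimes k_{\mathfrak{p}}))$ over the finite field $k_{\mathfrak{p}}$. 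In characteristic $p$ the enveloping algebra is module-finite over its $p$-center $Z_p\cong\sym(\mathfrak{g}^{(1)})$, and $Z_p$ sits inside the degree-zero Hochschild cohomology $\HH^0=Z(\mathfrak{U}\mathfrak{g})$, which is a derived invariant (Rickard–Happel). Moreover $\spec Z_p=\mathfrak{g}^{\ast(1)}$ carries a canonical Poisson bracket induced from the commutator of $\mathfrak{U}\mathfrak{g}$, encoding, via Frobenius, the Lie bracket of $\mathfrak{g}$. If the induced ring isomorphism of centers can be shown to be a morphism of Poisson algebras and to carry $Z_p$ to $Z_p$, then assembling these isomorphisms across all $\mathfrak{p}$ by the ultraproduct that defines $\mathbb{C}_{\infty}$ delivers the desired isomorphism $\mathcal{L}_{\infty}(\mathfrak{g}_1)\cong\mathcal{L}_{\infty}(\mathfrak{g}_2)$ of Poisson, hence Lie, algebras over $\mathbb{C}_{\infty}$.

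The hard part is precisely the Poisson-invariance of the center map for a \emph{general} $\mathfrak{g}$. A derived equivalence a priori only respects the ring structure of $\HH^0$; the bracket is second-order data, and to transport it I would exploit the compatibility of the equivalence with the order filtration on $\mathfrak{U}\mathfrak{g}$, equivalently with the Rees deformation $\mathfrak{U}_{\hbar}\mathfrak{g}$, so that the Poisson bracket emerges intrinsically as the first-order obstruction to deforming central elements inside the noncommutative algebra and is therefore preserved by any filtered lift of the equivalence. Two further points must be controlled for arbitrary $\mathfrak{g}$: that the equivalence can be arranged to be filtered (or that $\HH^0$ determines the filtration canonically), and that the full center does not strictly exceed $Z_p$ in a way that obstructs isolating the Poisson subalgebra $Z_p$. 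These are exactly the issues that confine the abstract's theorem to a class of Lie algebras, and removing them---proving that the bracket is transported by an \emph{arbitrary} equivalence, with no hypothesis on the size of the center---is where I expect the argument to fall short of the full conjecture, consistent with its open status. My proposal is thus to establish the implication unconditionally wherever the filtered transport of the Poisson structure can be secured, and to reduce the remaining content of the conjecture to the single structural statement that every derived equivalence of enveloping algebras is, after reduction mod $p$, Poisson on centers.
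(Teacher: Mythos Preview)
First, a framing point: the statement is the paper's Conjecture, not a theorem; the paper proves it only under Assumption~\ref{Assump} (Theorem~\ref{Derived}), and your proposal correctly anticipates that some hypothesis is needed. Your overall architecture---spread out over a finitely generated $S\subset\mathbb{C}$, reduce mod~$p$, use derived invariance of the center, assemble over primes, then descend from $\mathbb{C}_\infty$ to $\mathbb{C}$ via the scheme of isomorphisms inside $GL_n$---matches the paper's.

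Where you diverge is in locating the obstruction. You flag Poisson-invariance of the center map as ``the hard part'' and propose to secure it via a filtered or Rees lift of the equivalence. The paper resolves this step \emph{unconditionally}, with no hypothesis on $\mathfrak{g}$ and no appeal to the PBW filtration. The trick (Lemma~\ref{D}) is that the deformation Poisson bracket on $Z(A_p)$, for $A=\mathfrak{U}\mathfrak{g}_S$, is encoded by the connecting homomorphism $\HH^0(A_p)\to\HH^1(A_p)$ arising from the bimodule exact sequence $0\to A_p\to A/p^2A\to A_p\to 0$. Since a derived equivalence induces isomorphisms of Hochschild cohomology compatible with this boundary map (Rickard), the Poisson structures are carried along automatically. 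So the statement you isolate as the residual content of the conjecture---``every derived equivalence of enveloping algebras is, after reduction mod $p$, Poisson on centers''---is already a theorem, with no class restriction.

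The genuine bottleneck is the step you labeled soft: identifying $\mathcal{L}_\infty(\mathfrak{g})$ with $\mathfrak{g}\otimes_\mathbb{C}\mathbb{C}_\infty$. In the paper $\mathcal{L}_p(\mathfrak{g})=\mathfrak{m}_p/\mathfrak{m}_p^2$ for the augmentation ideal $\mathfrak{m}_p\subset Z(\mathfrak{U}\mathfrak{g}_p)$, and matching this cotangent space with (a trivial central extension of) $\mathfrak{g}_p$ requires explicit control of the \emph{full} center---specifically, that it is free over the $p$-center on the monomials in the images $\bar g_1,\dots,\bar g_n$ of the characteristic-zero generators (Lemma~\ref{center}). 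This is exactly what Assumption~\ref{Assump} buys, through regularity of the sequence $f_1,\dots,f_n$ and normality of the coinvariant ring. For an arbitrary finite-dimensional $\mathfrak{g}$ one has neither a description of $Z(\mathfrak{U}\mathfrak{g}_p)$ beyond $Z_p(\mathfrak{g}_p)$, nor a way to pick out a Poisson ideal whose cotangent space is $\mathfrak{g}_p$; the derived equivalence hands you an isomorphism of Poisson algebras $Z(\mathfrak{U}\mathfrak{g}_{1,p})\cong Z(\mathfrak{U}\mathfrak{g}_{2,p})$, but extracting $\mathfrak{g}_i$ from that is the open problem. Of your two listed difficulties, the second (center exceeding $Z_p$) is the real one; the first is not.
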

We show that the conjecture holds if Lie algebras $\mathfrak{g}_1,  \mathfrak{g}_2$ satisfy
Assumption \ref{Assump} below (see Theorem \ref{Derived}.)

A closely related problem is to understand $\Aut(\mathfrak{U}\mathfrak{g})$--the group of automorphisms of the enveloping algebra.
Of particular interest are its finite subgroups.
In this regard, 
the study of finite subgroups of automorphisms of  $\mathfrak{U}\mathfrak{g}$ for semi-simple $\mathfrak{g}$
and the corresponding fixed point rings have been of great interest for some time now,
see \cite{AP}, \cite{C}, \cite{CG}, \cite{J2}. For the special case of $\mathfrak{g}=\mathfrak{sl}_2$, all finite subgroups
of $\Aut(\mathfrak{U}\mathfrak{sl}_2)$ where classified by Fleury \cite{F}. More specifically,
she proved that if $\Gamma$ is a finite subgroup of $\Aut(\mathfrak{U}\mathfrak{sl}_2)$, then $\Gamma$
is conjugate to a subgroup of $\Aut(\mathfrak{sl}_2)\subset \Aut(\mathfrak{U}\mathfrak{sl}_2).$
The proof in \cite{F} relies on the explicit knowledge of generators of automorphism groups of
primitive quotients of $\mathfrak{U}\mathfrak{sl}_2$ (a result by Dixmier), no such results
are known for higher rank Lie algebras.

Following ideas and results
of  Kontsevich and Belov-Kanel on automorphisms of the  Weyl algebra \cite{BK}, \cite{K},
we approach these problems by reducing modulo large prime $p.$
In this context it is convenient to use the reduction modulo the  infinitely large prime construction.
Recall that given a commutative ring  $R$, its reduction modulo the infinitely large 
prime $R_{\infty}$ is defined as follows (see \cite{K}, \cite{BK})
$$R_{\infty}=\varinjlim_{f.g. S\subset R}(\prod_{p\in P} S/pS)/(\bigoplus_{p\in P} S/pS),$$
here the direct limit is taken over all finitely generated subrings $S\subset R$, and $P$ denotes
the set of all prime numbers \cite{K}. We have the canonical inclusion $R\otimes \mathbb{Q}\hookrightarrow R_{\infty}.$
In particular, if $R$ is an integral domain, then $R_{\infty}$ can (and will) be viewed is
an $R$-algebra via the canonical embedding $R\hookrightarrow R_{\infty}.$
Also, we have the Frobenius map $\fr_{\infty}:R_{\infty}\to R_{\infty},$  defined as follows:
$$\fr_{\infty}(\prod_px_p)=\prod_px_p^p.$$

 All results in this paper are about Lie algebras satisfying the following assumption.
 Examples of such Lie algebras besides semi-simple and Frobenius ones are 
certain $\mathbb{Z}_2$-contractions of reductive algebras, 
for example $\mathfrak{sl}_n(\mathbb{C})\ltimes \mathbb{C}^n$ (see \cite{Pa}).

\begin{assum}\label{Assump}
Let $\mathfrak{g}$ be an algebraic Lie algebra over $\mathbb{C}$ corresponding to a connected algebraic group
$G,$ with the trivial center, such that the following properties hold. The algebra of invariants
$\sym(\mathfrak{g})^\mathfrak{g}=\mathbb{C}[f_1,\cdots,f_n]$  is a polynomial algebra with homogeneous generators 
$f_1,\cdots, f_n;$ such that they form a regular sequence in $\sym(\mathfrak{g}).$ Moreover, 
the corresponding algebra of coinvariants $A=\sym(\mathfrak{g})/(f_1,\cdots,f_n)$ is a normal domain, 
such that the coadjoint action of $G$ on $\spec A$ has an open orbit.
\end{assum}

Given a perfect Lie algebra $\mathfrak{g}$: $ [\mathfrak{g}, \mathfrak{g}]=\mathfrak{g},$
satisfying Assumption \ref{Assump},
we  construct a canonical group homomorphism (Section 4)
$$D:\Aut(\mathfrak{U}\mathfrak{g})\to \Aut(\mathfrak{g}_{\mathbb{C}_{\infty}}),$$
where $\mathfrak{g}_{\mathbb{C}_{\infty}}=\mathfrak{g}\otimes_\mathbb{C}\mathbb{C}_{\infty}.$
We have a base change homomorphism 
$\fr_{\infty}^{*}:\Aut(\mathfrak{g})\to\Aut(\mathfrak{g}_{\mathbb{C}_{\infty}})$
induces by the Frobenius embedding $\fr_{\infty}:\mathbb{C}\to\mathbb{C}_{\infty}.$
The following is the main result of the paper.

\begin{theorem}\label{kernel}
Let $\mathfrak{g}$ be a perfect Lie algebra over $\mathbb{C}$ satisfying Assumption \ref{Assump}. 
Then there are no nontrivial semi-simple elements in $\ker(D).$
Moreover, $D$ restricts to $\mathrm{Fr}_{\infty}^{*}$ on $\mathrm{Aut}(\mathfrak{g}).$
In particular, if $\Gamma$ is a finite subgroup of $ \mathrm{Aut}(\mathfrak{U}\mathfrak{g}),$ 
then there exists  a subgroup $\Gamma'$ of $\mathrm{Aut}(\mathfrak{g}),$
such that $\Gamma$ is isomorphic to $\Gamma'$ as an abstract group.

\end{theorem}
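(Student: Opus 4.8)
The plan is to construct the homomorphism $D:\Aut(\mathfrak{U}\mathfrak{g})\to\Aut(\mathfrak{g}_{\mathbb{C}_\infty})$ and then analyze its kernel and its restriction to $\Aut(\mathfrak{g})$ by working modulo large primes. First I would fix a finitely generated subring $S\subset\mathbb{C}$ over which $\mathfrak{g}$, a chosen basis, the structure constants, and the invariants $f_1,\dots,f_n$ are all defined, so that reduction mod $p$ gives a Lie algebra $\mathfrak{g}_p$ over $\overline{\mathbb{F}}_p$ (or $\mathbb{F}_q$) for almost all $p$, with $\mathfrak{U}\mathfrak{g}_p$ having a large center: the $p$-center generated by the restricted-power map together with the reductions of the $f_i$. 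The key structural input here is Assumption~\ref{Assump}: because $\sym(\mathfrak{g})^{\mathfrak{g}}$ is a polynomial algebra on a regular sequence and the coinvariant algebra is a normal domain with an open coadjoint orbit, the center of $\mathfrak{U}\mathfrak{g}_p$ is (for $p\gg0$) a polynomial algebra, $\mathfrak{U}\mathfrak{g}_p$ is a finite module over its center, and its generic Azumaya locus is large; this is what lets one recover $\mathfrak{g}_p$ from $\mathfrak{U}\mathfrak{g}_p$ together with its $p$-structure. Given $\phi\in\Aut(\mathfrak{U}\mathfrak{g})$, I would spread it out over some $S$, reduce mod $p$ to get $\phi_p\in\Aut(\mathfrak{U}\mathfrak{g}_p)$, show $\phi_p$ preserves the $p$-center and acts on $\spec Z(\mathfrak{U}\mathfrak{g}_p)$, hence (via the restricted structure / the Frobenius-twisted Poisson structure on the center, as in the Weyl-algebra arguments of Belov-Kanel–Kontsevich) induces an automorphism of $\mathfrak{g}_p\otimes\overline{\mathbb{F}}_p$ that is compatible with the Frobenius; assembling these over all $p$ produces $D(\phi)\in\Aut(\mathfrak{g}_{\mathbb{C}_\infty})$, and functoriality in $\phi$ gives that $D$ is a group homomorphism.

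Next I would check the two assertions of the theorem. That $D$ restricts to $\fr_\infty^*$ on $\Aut(\mathfrak{g})\subset\Aut(\mathfrak{U}\mathfrak{g})$ should be essentially a computation: an honest Lie algebra automorphism reduces mod $p$ to a Lie algebra automorphism, which acts on $\mathfrak{g}_p$ in the evident way, and tracing through the identification of the induced automorphism of the center with the Frobenius-twist yields exactly $\fr_\infty^*$. For the kernel statement, suppose $\phi\in\ker(D)$ is semisimple, i.e. $\phi$ generates a diagonalizable (in particular, for a torsion element, cyclic) action on $\mathfrak{U}\mathfrak{g}$, and $D(\phi)=1$. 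The point is that $D(\phi)=1$ forces $\phi_p$ to act trivially on $\mathfrak{g}_p$ hence on the center $Z(\mathfrak{U}\mathfrak{g}_p)$ for almost all $p$; then $\phi_p$ is a $Z$-linear automorphism of the finite $Z$-algebra $\mathfrak{U}\mathfrak{g}_p$ which is the identity on $Z$. On the Azumaya locus $\mathfrak{U}\mathfrak{g}_p$ is an Azumaya algebra over $Z$, so by Skolem–Noether such a $\phi_p$ is locally inner; one then argues, using that $\mathfrak{g}$ is perfect (so $\mathfrak{U}\mathfrak{g}$ has no nontrivial characters and the only central units are scalars) and a careful globalization, that $\phi_p$ is in fact inner, implemented by a unit $u_p\in\mathfrak{U}\mathfrak{g}_p$. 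A semisimple inner automorphism is conjugation by a semisimple unit; but the only semisimple units available after reduction — using that $\mathfrak{U}\mathfrak{g}_p$ is a domain, or at least that its unit group modulo center is controlled — are central, so $\phi_p=1$ for $p\gg0$, and therefore $\phi=1$.

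The main obstacle I anticipate is the passage from ``$\phi_p$ is locally inner on the Azumaya locus'' to ``$\phi_p$ is globally inner, and its implementing unit is forced to be central once $\phi$ is semisimple with trivial $D$''. This requires controlling the Brauer group / Picard group obstruction on $\spec Z(\mathfrak{U}\mathfrak{g}_p)$ minus the non-Azumaya locus — here is precisely where normality of the coinvariant algebra $A$ and the codimension-$\geq 2$ nature of the complement of the open orbit (from Assumption~\ref{Assump}) must be used, exactly as codimension estimates are used in the Weyl-algebra case — and then a Galois-descent / group-cohomology argument over $\mathbb{F}_q$ to see that a semisimple automorphism that becomes inner cannot have been a nontrivial semisimple element to begin with. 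The final ``abstract group'' consequence is then formal: given a finite $\Gamma\le\Aut(\mathfrak{U}\mathfrak{g})$, every element is semisimple, so $D|_\Gamma$ is injective by the kernel statement, and its image $\Gamma'=D(\Gamma)\subseteq\Aut(\mathfrak{g}_{\mathbb{C}_\infty})$ is a finite subgroup; since $D$ agrees with the injective map $\fr_\infty^*$ on $\Aut(\mathfrak{g})$ and $\fr_\infty^*$ identifies $\Aut(\mathfrak{g})$ with its image, a descent/specialization argument (a finite subgroup of $\Aut(\mathfrak{g}_{\mathbb{C}_\infty})$ coming from $\mathfrak{U}\mathfrak{g}$ is conjugate into $\fr_\infty^*\Aut(\mathfrak{g})$, or at least abstractly isomorphic to a subgroup of $\Aut(\mathfrak{g})$) yields $\Gamma\cong\Gamma'\le\Aut(\mathfrak{g})$ as abstract groups.
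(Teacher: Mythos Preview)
Your outline has the right large-scale shape (spread out, reduce mod $p$, use Skolem--Noether on the center, conclude triviality), but there are two genuine gaps where the argument as written does not go through, and in both places the paper does something you are missing.

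\medskip

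\textbf{First gap: you never control the action on the Harish--Chandra part of the center.} You write that $D(\phi)=1$ forces $\phi_p$ to act trivially on $\mathfrak{g}_p$ ``hence on the center $Z(\mathfrak{U}\mathfrak{g}_p)$''. But $Z(\mathfrak{U}\mathfrak{g}_p)$ is generated over the $p$-center by the reductions $\bar g_1,\dots,\bar g_n$ of the Harish--Chandra generators, and the homomorphism $D$ sees only the $\mathfrak{g}_p$-summand of $\mathfrak{m}_p/\mathfrak{m}_p^2$, not the central summand spanned by the $\bar g_i$. So $D(\phi)=1$ tells you nothing a priori about how $\phi$ permutes or scales the $g_i$, and your ``hence'' is unjustified. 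The paper handles this by first passing to $B=\mathfrak{U}\mathfrak{g}/(g_1,\dots,g_n)$ and proving separately (via a Hochschild-cohomology argument: the extension classes $\omega_1,\dots,\omega_n\in\HH^2(B)$ coming from $I/I^2$ are linearly independent) that no nontrivial semisimple $\phi$ lies in the kernel of $\Aut(\mathfrak{U}\mathfrak{g})\to\Aut(B)$. Only after this reduction does one work with $\tilde D:\Aut(B)\to\Aut(\mathfrak{g}_{\mathbb{C}_\infty})$, where now $Z(B_{\bold k})\cong A_{\bold k}$ has augmentation cotangent exactly $\mathfrak{g}_{\bold k}$, with no extra central directions. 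Your plan has no substitute for this step.

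\medskip

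\textbf{Second gap: the endgame after Skolem--Noether.} You say that once $\phi_p$ is inner, ``a semisimple inner automorphism is conjugation by a semisimple unit; but the only semisimple units available after reduction --- using that $\mathfrak{U}\mathfrak{g}_p$ is a domain \dots --- are central''. But $\mathfrak{U}\mathfrak{g}_p$ is \emph{not} a domain (it is a finite module over its center and is full of zero divisors), and there is no reason its unit group modulo center is small. The paper's contradiction is different and uses the PBW filtration: working in $B_{\bold k}$, one picks an eigenvector $y$ of $\phi$ with eigenvalue $\alpha\neq 1$; innerness gives $a\bar y=\bar\alpha\,\bar y\,a$ in $B_{\bold k}$; passing to $\gr B_{\bold k}=A_{\bold k}$, which \emph{is} a commutative domain by Assumption~\ref{Assump}, one gets $\gr(a)\gr(\bar y)=\bar\alpha\,\gr(\bar y)\gr(a)$ with both sides nonzero, contradicting $\bar\alpha\neq 1$. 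This is where normality and the domain property of $A$ are actually used --- not via Brauer/Picard obstructions on an Azumaya locus as you propose, which would be an unnecessary detour.

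\medskip

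Finally, for the finite-subgroup conclusion, the paper does not attempt any conjugacy or descent of $D(\Gamma)$ into $\fr_\infty^*\Aut(\mathfrak{g})$; it simply picks a ring homomorphism $\theta:\mathbb{C}_\infty\to\mathbb{C}$, sets $D^*=\theta^*\circ D$, and shows directly (by a pigeonhole argument on the finitely many possible characteristic polynomials $\prod(t-\zeta_j)$ with $\zeta_j^m=1$) that $D^*$ is still injective on elements of finite order. Your sketch here is vaguer than it needs to be, but this part is more easily repaired than the two gaps above.
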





\begin{remark}
The most interesting application of the above result is for the case of a simple Lie algebra $\mathfrak{g}.$
In principle, this result provides a full classification of isomorphism classes of finite groups of automorphisms
of  $\mathfrak{U}\mathfrak{g}.$
However, although the construction of the subgroup $\Gamma'\subset \Aut(\mathfrak{g})$ is somewhat canonical,
we don not know if $\Gamma'$ is conjugate to $\Gamma$ in $\Aut(\mathfrak{U}\mathfrak{g}).$ 
In fact, we do not know if a much stronger statement about linearizability holds: Given a finite subgroup
$\Gamma\subset\Aut(\mathfrak{U}\mathfrak{g}),$
whether there exists a subgroup $\Gamma'\subset \Aut(\mathfrak{g}),$ such that $\Gamma'$ is conjugate to $\Gamma$
in $\Aut(\mathfrak{U}\mathfrak{g}).$

\end{remark}
 
 Throughout the paper, given an abelian group $M$, we  denote by $M_p$ its reduction modulo $p$: $M_p=M/pM.$







\section{Reduction modulo $p$ Lemmas}

In this section, given any finite dimensional Lie algebra $\mathfrak{g}$ over $\mathbb{C},$ we 
define $\mathbb{C}_{\infty}$-Lie algebras $\mathfrak{g}_{\infty},\hat{\mathfrak{g}_{\infty}},$
and compute them for Lie algebras satisfying Assumption \ref{Assump}, Lemma \ref{center}.
This construction  plays the crucial role in proving our main results.

At first, recall that given an associative flat $\mathbb{Z}$-algebra $R$ and a prime number $p,$
 then the center $Z(R_p)$ of its reduction modulo $p$  acquires the natural Poisson bracket, which
 we  refer to as the deformation Poisson bracket,
defined as follows. Given $a, b\in Z(R_p)$, let $z, w\in R$ be their lifts respectively. 
Then the Poisson bracket $\lbrace a, b \rbrace$ is defined to be $$\frac{1}{p}[z, w] \mod p\in Z(R_p).$$

Let $S$ be a finitely generated subring of $\mathbb{C},$ and 
let $\mathfrak{g}$ be a Lie algebra over $S$ which is a finite rank free $S$-module.
Throughout the paper we denote by $B$ the quotient of $\mathfrak{U}\mathfrak{g}$ by the augmentation ideal of its center
$$B=\mathfrak{U}\mathfrak{g}/(Z(\mathfrak{U}\mathfrak{g})\cap \mathfrak{g}\mathfrak{U}\mathfrak{g})\mathfrak{U}\mathfrak{g}.$$
Then we define $S_{\infty}$-Lie algebras $\mathcal{L}_{\infty}(\mathfrak{g}), \mathcal{L}_{\infty}'(\mathfrak{g})$ as follows.
Let $p>>0$ be a  large prime number. Then the  following augmentation ideals 
$$\mathfrak{m}_p=Z(\mathfrak{U}\mathfrak{g}_p)\cap \mathfrak{g}_p(\mathfrak{U}\mathfrak{g}_p),\quad \mathfrak{n}_p=Z(B_p)\cap\mathfrak{g}_pB_p $$
 are easily seen to be  Poisson ideals in $Z(\mathfrak{U}\mathfrak{g}_p),  Z(B_p)$ respectively.
Hence $\mathfrak{m}_p/\mathfrak{m}_p^2, \mathfrak{n}_p/\mathfrak{n}_p^2$ are Lie algebras,
and we  view them as Lie algebras over $S_p$ via the the Frobenius map $\fr_p:S_p\to S_p.$
Denote $\mathfrak{m}_p/\mathfrak{m}_p^2$ by $\mathcal{L}_p(\mathfrak{g})$, and $\mathfrak{n}_p/\mathfrak{n}_p^2$ 
by $\mathcal{L}_p'(\mathfrak{g}).$ 
Similarly given a base change $S\to \bold{k}$ to a field of characteristic $p$ we define  $\bold{k}$-Lie algebras 
$\mathcal{L}_{\bold{k}}(\mathfrak{g}), \mathcal{L}_{\bold{k}}'(\mathfrak{g}).$
 We put
$$\mathcal{L}_{\infty}(\mathfrak{g})=\prod_{p\in P}\mathcal{L}_p(\mathfrak{g})/\bigoplus_{p\in P}\mathcal{L}_p(\mathfrak{g}),\quad
\mathcal{L}_{\infty}'(\mathfrak{g})=\prod_{p\in P}\mathcal{L}_p'(\mathfrak{g})/\bigoplus_{p\in P}\mathcal{L}_p'(\mathfrak{g}), $$
Now, given a Lie algebra over $\mathbb{C},$ we define 
$$\mathcal{L}_{\infty}(\mathfrak{g})=\varinjlim_{f.g. S\subset \mathbb{C}}\mathcal{L}_{\infty}(\mathfrak{g}_S),
\quad \mathcal{L}_{\infty}'(\mathfrak{g})=\varinjlim_{f.g. S\subset \mathbb{C}}\mathcal{L}_{\infty}'(\mathfrak{g}_S).$$
Here, $\mathfrak{g}_S$ is a model of $\mathfrak{g}$ over $S:$ $\mathfrak{g}_S\otimes_S\mathbb{C}=\mathfrak{g}.$
We have the natural surjective homomorphism $\mathcal{L}_{\infty}(\mathfrak{g})\to \mathcal{L}_{\infty}'(\mathfrak{g}).$
If $\mathfrak{g}_S$ is an algebraic Lie algebra over $S,$ then
we  construct below a canonical Lie algebra homomorphism 
 $$\mathfrak{g}_{S_{\infty}}=\mathfrak{g}_S\otimes_SS_{\infty}\to \mathcal{L}_{\infty}(\mathfrak{g}).$$
 It  follows from Lemma \ref{center} below that if $\mathfrak{g}$ satisfies Assumption \ref{Assump}, then
 $\mathcal{L}_{\infty}(\mathfrak{g})$ is isomorphic to a trivial central extension of
 $\mathfrak{g}_{\mathbb{C}_{\infty}}=\mathfrak{g}\otimes_{\mathbb{C}}\mathbb{C}_{\infty},$ while
 $\mathcal{L}_{\infty}'(\mathfrak{g})\cong\mathfrak{g}_{\mathbb{C}_{\infty}}.$

 Next we recall a key computation of the Poisson bracket for restricted Lie
algebras due to Kac and Radul \cite{KR}. First, we recall and fix some notations associated with enveloping algebras of restricted Lie algebras.
Let $R$ be a  commutative reduced ring of  characteristic $p>0.$
Let $\mathfrak{g}$ be a restricted Lie algebra over $R$ ($\mathfrak{g}$ is assumed to be a finite free $R$-module) 
with the restricted structure map $g\to g^{[p]}, g\in \mathfrak{g}.$
Then by $Z_p(\mathfrak{g})$ we  denote the central $R$-subalgebra of the enveloping algebra $\mathfrak{U}\mathfrak{g}$
generated by elements of the form $g^p-g^{[p]}, g\in \mathfrak{g}.$ It is well-known that the map $g\to g^{p}-g^{[p]}$ induces
homomorphism of $R$-algebras $$i:\sym (\mathfrak{g})\to Z_p(\mathfrak{g}),$$
where $Z_p(\mathfrak{g})$ is viewed as an $R$-algebra via the Frobenius map $F:R\to R.$
The homomorphism $i$ is an isomorphism when $R$ is perfect.
Also, recall that
the Lie algebra bracket on $\mathfrak{g}$ defines the Kirillov-Kostant Poisson bracket on the symmetric algebra $\sym(\mathfrak{g}).$


The following is the key  result from \cite{KR}.
We  include a proof for the reader's convenience.
\begin{lemma}\label{key}
Let $R$ be a finitely generated integral domain over $\mathbb{Z}.$
Let $G$ be an affine algebraic group over  $R$, let $\mathfrak{g}$ be its Lie
algebra. Let $G_p, \mathfrak{g}_p$ be reductions modulo $p$ of $G, \mathfrak{g}$ respectively.
Thus $Z(\mathfrak{U}\mathfrak{g}_p)$ is equipped with the deformation Poisson bracket.
Then $Z_p(\mathfrak{g}_p)$ is a Poisson subalgebra of $Z(\mathfrak{U}\mathfrak{g}_p),$ moreover the induced Poisson
bracket coincides with the negative of the Kirrilov-Kostant bracket:
$$\lbrace a^p-a^{[p]}, b^p-b^{[p]}\rbrace=-([a, b]^p-[a,b]^{[p]}),\quad a\in \mathfrak{g}_p, b\in \mathfrak{g}_p.$$

\end{lemma}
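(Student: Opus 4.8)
The plan is to reduce the whole statement to the single displayed identity
$\{a^p-a^{[p]},\,b^p-b^{[p]}\}=-([a,b]^p-[a,b]^{[p]})$ for $a,b\in\mathfrak{g}_p$. From this the assertion that $Z_p(\mathfrak{g}_p)$ is a Poisson subalgebra follows immediately: the deformation bracket is a biderivation of $Z(\mathfrak{U}\mathfrak{g}_p)$, the algebra $Z_p(\mathfrak{g}_p)$ is generated over $R_p$ by the central elements $\pi(x):=x^p-x^{[p]}$, $x\in\mathfrak{g}_p$, and the displayed formula shows the bracket of two such generators lands back in $Z_p(\mathfrak{g}_p)$. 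To compute the bracket I would fix a $\mathbb{Z}$-flat model: a Lie algebra $\mathfrak{g}_R$, free of finite rank over $R$, with $\mathfrak{g}_R/p=\mathfrak{g}_p$, so that $\mathfrak{U}\mathfrak{g}_R$ is a free $R$-module with $\mathfrak{U}\mathfrak{g}_R/p=\mathfrak{U}\mathfrak{g}_p$; then choose lifts $\tilde a,\tilde b\in\mathfrak{g}_R$ of $a,b$ and lifts $\widetilde{a^{[p]}},\widetilde{b^{[p]}}\in\mathfrak{g}_R$ of the restricted powers. The elements $z_a:=\tilde a^{\,p}-\widetilde{a^{[p]}}$ and $z_b$ lift $\pi(a),\pi(b)$, and since the deformation bracket is independent of the chosen lifts, $\{\pi(a),\pi(b)\}=\tfrac1p[z_a,z_b]\bmod p$.

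The engine is the elementary identity $x^m y=\sum_{k\ge0}\binom{m}{k}(\ad x)^k(y)\,x^{m-k}$, valid over $\mathbb{Z}$, which after a hockey-stick summation gives $[\tilde x^{\,p},\tilde y]=\sum_{k=0}^{p-1}\binom{p}{k+1}(\ad\tilde x)^k([\tilde x,\tilde y])\,\tilde x^{\,p-1-k}$ for $\tilde x,\tilde y\in\mathfrak{g}_R$. Since $\binom{p}{k+1}$ is divisible by $p$ for $0\le k\le p-2$ while $\binom{p}{p}=1$, reduction modulo $p$ yields $[x^p,y]\equiv(\ad x)^p(y)$ in $\mathfrak{U}\mathfrak{g}_p$; combined with the defining relation $(\ad x)^p=\ad(x^{[p]})$ of the restricted structure this shows $[z_a,\tilde b]\in p\,\mathfrak{U}\mathfrak{g}_R$, so $\partial_a:=\tfrac1p\ad(z_a)\bmod p$ is a well-defined derivation of $\mathfrak{U}\mathfrak{g}_p$, and the integers $\binom{p}{k+1}/p$ record $\partial_a$ on $\mathfrak{g}_p$ explicitly. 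I would then write $\{\pi(a),\pi(b)\}=\partial_a(b^p)-\partial_a(b^{[p]})$, where $\partial_a(b^p)=\sum_{j=0}^{p-1}b^j\,\partial_a(b)\,b^{p-1-j}$ collapses, by the same hockey-stick identity modulo $p$, to $(\ad b)^{p-1}(\partial_a(b))$.

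What remains is bookkeeping: substitute the explicit expressions for $\partial_a(b)$ and $\partial_a(b^{[p]})$, apply $(\ad b)^{p-1}$, and use the Jacobson formula for $(\lambda a+\mu b)^{[p]}$ in terms of $a^{[p]},b^{[p]}$ and iterated brackets, in order to rewrite everything as $-([a,b]^p-[a,b]^{[p]})$. I would organize this by first comparing the PBW-degree-$p$ parts — where the identity becomes a statement in $\sym^p\mathfrak{g}_p$ about the Kirillov--Kostant action and should pin down the term $[a,b]^p$ — and then the residual degree-$\le1$ part, where the restricted-structure axioms supply the needed cancellation. The main obstacle is precisely this combinatorial collapse: controlling the coefficients $\binom{p}{k}/p\bmod p$ and matching them against the coefficients in the Jacobson expansion of the $p$-operation; the choice-independence of $\widetilde{a^{[p]}}$ is automatic, since changing it alters $z_a$ by an element of $p\,\mathfrak{U}\mathfrak{g}_R$. (One could shorten part of the bookkeeping by choosing a faithful representation $G_R\hookrightarrow GL_m$ over a localization of $R$, embedding $\mathfrak{g}_p$ as a restricted subalgebra of $\mathfrak{gl}_m(R_p)$ compatibly with $i$ and with both brackets, and proving the identity for $\mathfrak{gl}_m$, where $x^{[p]}$ is the ordinary matrix power; I expect essentially the same combinatorial core there, so I would keep the direct computation as the primary route.)
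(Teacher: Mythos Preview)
Your approach is quite different from the paper's. The paper does not compute anything in $\mathfrak{U}\mathfrak{g}$ at all; instead it embeds $\mathfrak{U}\mathfrak{g}_p$ into the ring $D_{G_p}$ of differential operators on the group via left-invariant vector fields, and then quotes the Belov-Kanel--Kontsevich/BMR result that for a smooth affine variety $\overline{X}$ in characteristic $p$ the center of $D_{\overline{X}}$ is the Frobenius twist of $\mathcal{O}(T^{*}\overline{X})$ and the deformation Poisson bracket is the negative of the canonical symplectic bracket on the cotangent bundle. Under this embedding $Z_p(\mathfrak{g}_p)$ sits inside $Z(D_{G_p})$ as the left-invariant part, and the Kirillov--Kostant bracket is by construction the restriction of the cotangent bracket, so the identity falls out for free. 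Your parenthetical alternative of embedding into $\mathfrak{gl}_m$ is in the same spirit (reduce to a ``universal'' algebra where the identity is more transparent), but the paper's choice of $D_G$ is better: it lets one invoke a result already in the literature rather than redo the matrix computation.

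Your primary route---direct combinatorics with $\binom{p}{k}/p$, the hockey-stick identity, and Jacobson's formula---is a legitimate and more elementary strategy, and everything you write up through $\partial_a(b^p)\equiv(\ad b)^{p-1}(\partial_a(b))$ is correct. What you call ``bookkeeping,'' however, is where all the content lives: one must actually match the coefficients $\tfrac{1}{p}\binom{p}{k}\bmod p=\tfrac{(-1)^{k-1}}{k}$ against the Lie polynomials $s_i(a,b)$ in Jacobson's expansion of the $p$-operation, and this cancellation is not automatic from general principles. You also have a lift-dependence issue you have not fully confronted: the derivation $\partial_a$ on all of $\mathfrak{U}\mathfrak{g}_p$ (as opposed to its restriction to the center) depends on the choice of $\widetilde{a^{[p]}}$, shifting by an inner derivation $\ad(g)$; your formula $\partial_a(b^p)-\partial_a(b^{[p]})$ is well-defined only after this ambiguity cancels, and you should track that cancellation explicitly in the final step. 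None of this is fatal---the computation can be (and has been, essentially by Kac--Radul) carried through---but as written your proposal stops just short of the actual work, whereas the paper's two-line argument via $D_G$ avoids it entirely.
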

\begin{proof}
The proof directly follows from a similar result about Weyl algebras in \cite{BK}.
More specifically, let $X$ be a smooth affine variety over $R$, and let $D_X$ denote the algebra of
differential operators on $X.$ Put $\overline{X}=X\mod p$. Then the center of $D_X\mod p=D_{\overline{X}}$ can be identified with
(the Frobenius twist) of the functions on the cotangent bundle $T^{*}_{\overline{X}}$ \cite{BMR}. Then the deformation Poisson
bracket of $Z(D_{\overline{X}})$ is equal to the negative of the Poisson bracket coming from the symplectic structure
of the cotangent bundle of $\overline{X}.$

Now let $\theta:\mathfrak{g}\to D_G$ be the realization of $\mathfrak{g}$ as left-invariant vector fields
on $G.$ Then we have the corresponding embedding
$\theta:\mathfrak{U}\mathfrak{g}\to D_G$ and the corresponding reduction modulo $p $
$\bar{\theta}:\mathfrak{U}\mathfrak{g}_p\to D_{G_p}$,
which induces the embedding  $Z_p(\mathfrak{g}_p)\to Z(D_{G_p}).$
In this way $Z_p(\mathfrak{g}_p)$ is a Poisson subalgebra of $Z(D_{G_p})$ and the assertion follows. 
\end{proof}
It is clear that $i^{-1}(\mathfrak{m}_p)=\mathfrak{g}_p\sym \mathfrak{g}_p.$ So, in view of Lemma\ref{key} we have the canonical
Lie algebra homomorphism $\mathfrak{g}_p\to \mathfrak{m}_p/\mathfrak{m}_p^2.$
Hence, we obtain the desired canonical homomorphism 
$$\mathfrak{g}_{S_{\infty}}\to \mathcal{L}_{\infty}(\mathfrak{g}).$$

We have the following

\begin{lemma}\label{unique}
Let $S$ be a finitely generated subring of $\mathbb{C}.$ Let $\mathfrak{g}$ be a nilpotent Lie algebra over $S.$ Let
$p>>0$ be a prime. 
Let $I$ be a Poisson ideal of $Z(\mathfrak{U}\mathfrak{g}_p),$ such that $Z(\mathfrak{U}\mathfrak{g}_p)/I=S_p.$
Then $I/I^2\cong \mathfrak{m}_p/\mathfrak{m}_p^2$ as $S_p$-Lie algebras. 

\end{lemma}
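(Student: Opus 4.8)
The statement is a uniqueness assertion: for nilpotent $\mathfrak{g}$ and $p$ large, any Poisson ideal $I \subset Z(\mathfrak{U}\mathfrak{g}_p)$ with residue ring $S_p$ gives $I/I^2 \cong \mathfrak{m}_p/\mathfrak{m}_p^2$ as $S_p$-Lie algebras. The first thing I would do is identify the Poisson scheme $\spec Z(\mathfrak{U}\mathfrak{g}_p)$ concretely. Since $\mathfrak{g}$ is nilpotent, over a large prime $p$ the Lie algebra $\mathfrak{g}_p$ is restricted with $p$-operation $g \mapsto g^{[p]}$ (for a nilpotent Lie algebra one may even arrange $g^{[p]}=0$ after shrinking $S$, but I will not need that). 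By the Kac--Radul computation recalled in Lemma \ref{key}, the subalgebra $Z_p(\mathfrak{g}_p)$ generated by $g^p - g^{[p]}$ is, via the map $i:\sym(\mathfrak{g}_p)\to Z_p(\mathfrak{g}_p)$, identified with the Frobenius twist of $\sym(\mathfrak{g}_p)$ carrying the negative Kirillov--Kostant bracket. Moreover, for $\mathfrak{g}$ nilpotent and $p$ large, $Z(\mathfrak{U}\mathfrak{g}_p) = Z_p(\mathfrak{g}_p)$ — this is the classical fact that the center of the enveloping algebra of a nilpotent restricted Lie algebra in large characteristic is exactly the $p$-center (it follows e.g. from the description of the generic PI-degree and a dimension count, or from Zassenhaus; I would cite this). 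So $\spec Z(\mathfrak{U}\mathfrak{g}_p) \cong \mathfrak{g}_p^{*(1)}$, the Frobenius-twisted dual, with the KK Poisson structure.

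**Reduction to a statement about Poisson ideals of $\sym(\mathfrak{g}_p)$ cutting out fixed points.** Under this identification, $\mathfrak{m}_p = \mathfrak{g}_p \sym(\mathfrak{g}_p)$ is the augmentation ideal, i.e. the maximal ideal of the origin $0 \in \mathfrak{g}_p^*$, which is the unique closed point fixed by the coadjoint action — equivalently the unique point where the KK bracket vanishes to order... well, the origin is the unique point of $\mathfrak{g}_p^*$ that is $G_p$-fixed, since $\mathfrak{g}$ has (for nilpotent $\mathfrak{g}$ we can assume, or it is automatic) no nonzero coadjoint fixed vector apart from those killed by $[\mathfrak{g},\mathfrak{g}]$ — here one uses that $I$ has residue field $S_p$ and is Poisson, forcing its zero locus to be a $G_p$-stable point, and the origin is the only such. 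Thus $I$ and $\mathfrak{m}_p$ have the same radical after base change, but they need not be equal as ideals; the content is that $I/I^2 \cong \mathfrak{m}_p/\mathfrak{m}_p^2$ as Lie algebras regardless. The Lie bracket on $\mathfrak{m}_p/\mathfrak{m}_p^2$ is the one induced by the KK bracket, and $\mathfrak{m}_p/\mathfrak{m}_p^2 \cong \mathfrak{g}_p$ with (negative of, or $p$-th power twisted) its original bracket — this is the computation already used in the excerpt right after Lemma \ref{key}.

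**The core argument.** So I need: if $I$ is a Poisson ideal of $\sym(\mathfrak{g}_p)$ (Frobenius twist suppressed) with $\sym(\mathfrak{g}_p)/I = S_p$, then $I/I^2 \cong \mathfrak{g}_p$ as Lie algebras (with the appropriate sign/twist), matching $\mathfrak{m}_p/\mathfrak{m}_p^2$. The strategy is a filtered/graded argument using nilpotency. Filter $\sym(\mathfrak{g}_p)$ by powers of $\mathfrak{m}_p$; the associated graded Poisson algebra of $\sym(\mathfrak{g}_p)$ at the origin is again $\sym(\mathfrak{g}_p)$ but with the \emph{linearized} KK bracket, which for a nilpotent Lie algebra is a strictly lower-order perturbation — the key point being that for nilpotent $\mathfrak{g}$, the KK bracket $\{x_i,x_j\} = \sum c^k_{ij} x_k$ has no constant term and the structure constants can be put in strictly upper-triangular form, so passing to $\gr$ along a suitable filtration the bracket degenerates in a controlled way. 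One then shows that $I$, being Poisson with the prescribed quotient, must be a "Poisson-symplectic-like" ideal whose conormal module $I/I^2$ is forced to be $\mathfrak{g}_p$ by tracking how the bracket acts: the point $0$ is fixed, so $\mathfrak{g}_p$ acts on $I/I^2$, and this action together with the surjection $\mathfrak{m}_p/\mathfrak{m}_p^2 \twoheadrightarrow$ (or $\hookrightarrow$) $I/I^2$... Actually the cleanest route: both $\mathfrak{m}_p$ and $I$ are ideals of definition of the same point (the origin), so there is $N$ with $\mathfrak{m}_p^N \subset I \subset \mathfrak{m}_p$; the Poisson condition plus nilpotency should force $I = \mathfrak{m}_p$ outright for $p$ large, or at least force an isomorphism on conormals. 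I would try to prove $I = \mathfrak{m}_p$ directly: if $I \subsetneq \mathfrak{m}_p$ then pick $f \in \mathfrak{m}_p \setminus I$ of minimal $\mathfrak{m}_p$-degree; since $\sym(\mathfrak{g}_p)/I = S_p$ every $g \in \mathfrak{g}_p$ satisfies $g - s_g \in I$ for a unique $s_g \in S_p$, and $g \mapsto s_g$ is (because $I$ is an ideal and $\mathfrak{g}$ is perfect-free... no, nilpotent) forced to kill $[\mathfrak{g},\mathfrak{g}]$ and be a character; by nilpotency plus $p \gg 0$ one shows all $s_g = 0$, whence $\mathfrak{g}_p \subset I$, whence $\mathfrak{m}_p \subseteq I$, contradiction — so $I = \mathfrak{m}_p$ and the isomorphism is the identity.

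**Main obstacle.** The delicate step is showing the character $g \mapsto s_g$ vanishes — equivalently that $I$ cannot cut out a point other than the origin. Any $G_p$-fixed point would work formally, but $\mathfrak{g}_p^*$ for nilpotent $\mathfrak{g}_p$ has the origin as its \emph{only} fixed point precisely when $\mathfrak{g}$ is perfect, which a nonzero nilpotent Lie algebra never is; so I must instead use the Poisson (not just $G$-invariant) nature of $I$ more carefully: the zero set of $I$ need not be a fixed point, only a point whose tangent directions are constrained by the symplectic foliation — but a point that is a single $S_p$-point and Poisson-closed must lie in the zero-dimensional part of the foliation, i.e. be a \emph{fixed} point of the coadjoint action after all (a Poisson-closed point is symplectic-leaf-closed, hence a $0$-dimensional leaf, hence coadjoint-fixed). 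For nilpotent $\mathfrak{g}$ the $0$-dimensional coadjoint orbits in $\mathfrak{g}_p^*$ are exactly the annihilator of $[\mathfrak{g},\mathfrak{g}]$, a nonzero affine subspace — so $I$ could be the ideal of any such point, and then $I/I^2 \cong \mathfrak{m}_p/\mathfrak{m}_p^2$ still holds because coadjoint translation by $[\mathfrak{g},\mathfrak{g}]^{\perp}$ carries the origin to that point Poisson-isomorphically. Hence the real proof is: (i) $\mathrm{Spec}(\sym(\mathfrak{g}_p)/I)$ is a $0$-dim symplectic leaf, so a coadjoint-fixed $S_p$-point $\chi \in [\mathfrak{g}_p,\mathfrak{g}_p]^{\perp}$; (ii) translation by $\chi$ is a Poisson automorphism of $\sym(\mathfrak{g}_p)$ sending $\mathfrak{m}_p$ to the maximal ideal of $\chi$; (iii) chasing this through $i$ and the identification $Z(\mathfrak{U}\mathfrak{g}_p) = Z_p(\mathfrak{g}_p)$ yields the Lie algebra isomorphism $I/I^2 \cong \mathfrak{m}_p/\mathfrak{m}_p^2 \cong \mathfrak{g}_p$. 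I expect step (i) — controlling symplectic leaves of $\sym(\mathfrak{g}_p)$ in positive characteristic and ruling out that $I$ defines a fat or positive-dimensional scheme despite the residue-field hypothesis already making it a reduced point — together with the bookkeeping of the Frobenius twist, to be the main technical work; the nilpotency hypothesis is what makes the leaf structure tractable and is surely used to guarantee $p \gg 0$ behaves uniformly.
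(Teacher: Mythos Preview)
Your identification $Z(\mathfrak{U}\mathfrak{g}_p)=Z_p(\mathfrak{g}_p)$ is false for nilpotent $\mathfrak{g}$, and this is the main gap. A nonzero nilpotent Lie algebra has nontrivial centre $\mathfrak{z}(\mathfrak{g})$, and every $z\in\mathfrak{z}(\mathfrak{g}_p)$ lies in $Z(\mathfrak{U}\mathfrak{g}_p)$ but not in $Z_p(\mathfrak{g}_p)$. For the three--dimensional Heisenberg algebra with $[e,f]=z$ one has $Z(\mathfrak{U}\mathfrak{g}_p)=S_p[e^p,f^p,z]$ while $Z_p(\mathfrak{g}_p)=S_p[e^p,f^p,z^p]$, a strict inclusion. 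So you cannot reduce the problem to Poisson ideals of $\sym(\mathfrak{g}_p)$ alone: a Poisson ideal $I\subset Z(\mathfrak{U}\mathfrak{g}_p)$ with residue $S_p$ is not determined by its trace on $Z_p$ without further argument, and neither is the Lie algebra $I/I^2$. Neither Zassenhaus's theorem nor a PI--degree count gives the equality you cite; they only give that $Z(\mathfrak{U}\mathfrak{g}_p)$ is finite over $Z_p(\mathfrak{g}_p)$.

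The paper fills precisely this gap. One sets $I'=I\cap Z_p(\mathfrak{g}_p)$; as in your outline, $I'$ corresponds via $i$ to a character $\chi:\mathfrak{g}_p\to S_p$ vanishing on $[\mathfrak{g}_p,\mathfrak{g}_p]$. The shift $\phi(g)=g-\chi(g)$ is then taken as an automorphism of the \emph{entire} enveloping algebra $\mathfrak{U}\mathfrak{g}_p$, not merely of $\sym\mathfrak{g}_p$, so it acts on the full center. The step that genuinely uses nilpotency of $\mathfrak{g}$ is this: the quotient $\mathfrak{U}\mathfrak{g}_p/(g^p:g\in\mathfrak{g}_p)$ has nilpotent augmentation ideal, and a short Frobenius argument then shows that any ideal $J\subset Z(\mathfrak{U}\mathfrak{g}_p)$ with $J\cap Z_p$ equal to the augmentation of $Z_p$ and with $Z/J=S_p$ must equal $\mathfrak{m}_p$. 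Applying this to $\phi^{-1}(I)$ yields $\phi(\mathfrak{m}_p)=I$. Finally, to see that $\phi$ respects the deformation Poisson bracket on all of $Z(\mathfrak{U}\mathfrak{g}_p)$ (not only on $Z_p$, where Lemma~\ref{key} would suffice), the paper lifts $\chi$ to a character over $S/p^2S$ and hence lifts $\phi$ to an automorphism of $\mathfrak{U}(\mathfrak{g}/p^2\mathfrak{g})$. Your translation--by--$\chi$ idea is the correct endpoint, but the two ingredients just named---uniqueness of $I$ above $I'$, and Poisson--ness of $\phi$ on the full center via a mod $p^2$ lift---are absent from your sketch and cannot be recovered from the false identification $Z=Z_p$.
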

\begin{proof}

We claim that given any ideal $I$ in $Z(\mathfrak{U}\mathfrak{g}_p),$ such that
$I\cap Z_p(\mathfrak{g}_p)=(g^p, g\in \mathfrak{g}_p)$ and $Z(\mathfrak{U}\mathfrak{g}_p)/I=S_p,$
then $I=\mathfrak{m}_p.$ 
Indeed, since $\mathfrak{U}\mathfrak{g}_p/(g^p, g\in \mathfrak{g}_p)\mathfrak{U}\mathfrak{g}_p$ is a nilpotent $S_p$-algebra,
it follows that $Z(\mathfrak{U}\mathfrak{g}_p)/(g^p, g\in \mathfrak{g}_p)Z(\mathfrak{U}\mathfrak{g}_p)$ is also a nilpotent $S_p$-algebra:
$\mathfrak{m}_p^{p^l}\subset (g^p, g\in \mathfrak{g}_p)Z(\mathfrak{U}\mathfrak{g}_p)$ for large enough $l.$
If $a+y\in I, a\in S_p, y\in \mathfrak{m}_p$, then 
$$(a+y)^{p^l}\in a^{p^l}+(g^p, g\in \mathfrak{g}_p)Z(\mathfrak{U}\mathfrak{g}_p),$$
 so $a=0$ and $I\subset\mathfrak{m}_p.$
Put $I'=I\cap Z_p(\mathfrak{g}).$ Then $I'$ is a Poisson ideal and $Z_p(\mathfrak{g})/I'=S_p.$
Hence, $I'=(g^p-\chi(g), g\in \mathfrak{g}_p)$, where $\chi:\mathfrak{g}_p\to S_p$ is a Lie algebra
homomorphism. Then $\phi(g)=g-\chi(g)$ induces an automorphism $\phi:\mathfrak{U}\mathfrak{g_p}\to\mathfrak{U}\mathfrak{g_p},$
such that $\phi(m_p)=I.$ Moreover $\phi$ admits a lift over ${S/p^2S}.$ Indeed, take any character 
$\hat{\chi}:\mathfrak{g}/p^2\mathfrak{g}\to S/p^2$ that lifts $ \chi$ and put $\tilde{\phi}(g)=\tilde{g}-\chi(g).$
Then $\tilde{\phi}:\mathfrak{U}(\mathfrak{g}/p^2\mathfrak{g})\to\mathfrak{U}(\mathfrak{g}/p^2\mathfrak{g}$) is an automorphisms
such that $\phi=\tilde{\phi}\mod p^2.$
Hence, $\phi$ is a Poisson automorphism of $Z(\mathfrak{U}\mathfrak{g}_p).$ Thus we obtain the desired isomorphism
of $S_p$-Lie algebras $I/I^2\cong \mathfrak{m}_p/\mathfrak{m}_p^2.$


\end{proof}

From now on we  fix a Lie algebra $\mathfrak{g}$ satisfying Assumption \ref{Assump}.
It follows that we may choose a finitely generated subring $S\subset \mathbb{C},$
and a Lie algebra $\mathfrak{g}_S$ over $S$ which is finite free $S$-module, such that  
$\mathfrak{g}=\mathfrak{g}_S\otimes_S\mathbb{C}, f_i\in \sym \mathfrak{g}_S$ and
$A_S=\sym \mathfrak{g}_S/(f_1,\cdots,f_n)$ is a normal integral domain. Moreover
$\spec A_S$ has a nonempty open subset $U_S$ which is symplectic over $S$ under the
Kirillov-Kostant bracket.
Denote by $g_i$ the image of $f_i$ under the symmetrization isomorphism 
$\sym(\mathfrak{g})^{\mathfrak{g}}\to Z(\mathfrak{U}\mathfrak{g}).$ Hence $\gr(g_i)=f_i.$
We may assume that $S$ is large enough so that $g_i\in \mathfrak{U}\mathfrak{g}_S, 1\leq i\leq n.$
Just as before, we  put $B_S=\mathfrak{U}\mathfrak{g}_S/(g_1,\cdots, g_n).$
Hence $B=B_S\otimes_S\mathbb{C}.$ Given a commutative $S$-algebra $R$, we  denote by
$\mathfrak{g}_R, B_R$ and $A_R$ the  base changes of $\mathfrak{g}_S, B_S , A_S$
respectively. In particular, for a base change $R\to \bold{k}$, where $\bold{k}$
is an algebraically closed field, $A_{\bold{k}}$ is a normal integral domain (for $p>>0$).
We  denote images of $f_i, g_i$ in $\sym \mathfrak{g}_p, Z(\mathfrak{U}\mathfrak{g}_p)$ by
$\bar{f_i},\bar{g_i}.$ Given a commutative $S_p$-algebra $R$, we  denote by
$\mathfrak{m}_R$( respectively $\mathfrak{n}_R),$ the augmentation ideal 
$Z(\mathfrak{U}\mathfrak{g}_R)\cap \mathfrak{g}_R\mathfrak{U}\mathfrak{g}_R$ (respectively 
$Z(B_R)\cap \mathfrak{g}_RB_R.$)




Then we have the following
\begin{lemma}\label{center}
Let $\mathfrak{g}$ be as above. Then, 
$Z(\mathfrak{U}\mathfrak{g}_p)$ is a free $Z_p(\mathfrak{g}_p)$-module with the basis
  $$\lbrace\bar{g_1}^{\alpha_1}\cdots{\bar{g_n}}^{\alpha_n}, 0\leq \alpha_i<p, 1\leq i\leq n\rbrace.$$
In particular, $\mathcal{L}_{\infty}'(\mathfrak{g})=\mathfrak{g}_{S_{\infty}},$ while $\mathcal{L}_{\infty}(\mathfrak{g})$ is a trivial central
extension of $\mathfrak{g}_{S_{\infty}}.$
Moreover, given a Poisson ideal  $I$ in $Z(\mathfrak{U}\mathfrak{g}_p),$ such that $Z(\mathfrak{U}\mathfrak{g}_p)/I=S/pS,$
then $I/I^2$ is a trivial central extension of $\mathfrak{g}_p$  as an $S_p$-Lie algebra.
If $\mathfrak{g}$ is perfect, then $\mathfrak{n}_p$ is the unique Poisson ideal of
 $Z(B_p),$
such that  $Z(B_p)/\mathfrak{n}_p=S_p.$
\end{lemma}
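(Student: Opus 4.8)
The plan is in two steps: first to pin down $Z(B_p)$ as a Poisson algebra, and then to exploit perfectness of $\mathfrak{g}$ to see that it has a unique ``Poisson point''. Throughout $p$ is large; after replacing $S$ by a localization and, should $\spec S_p$ be reducible, treating its irreducible components one at a time, I may assume $A_p=\sym\mathfrak{g}_p/(\bar f_1,\dots,\bar f_n)$ is a normal domain (Assumption \ref{Assump}) and that $B_p$ is a prime ring, the latter since $\gr B_p=A_p$.

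\emph{Step 1: identification of $Z(B_p)$.} By the first part of this Lemma, $Z(\mathfrak{U}\mathfrak{g}_p)=\bigoplus_{0\le\alpha_i<p}Z_p(\mathfrak{g}_p)\,\bar g^{\alpha}$. Since each $g_i$ lies in the augmentation ideal, reducing modulo $(\bar g_1,\dots,\bar g_n)$ annihilates every $\bar g^{\alpha}$ with $\alpha\neq 0$; writing $\bar g_i^{\,p}=q_i+\sum_{\alpha\neq 0}(\cdots)\bar g^{\alpha}$ with $q_i\in Z_p(\mathfrak{g}_p)$, one checks that $Z_p(\mathfrak{g}_p)\cap(\bar g_1,\dots,\bar g_n)\mathfrak{U}\mathfrak{g}_p=(q_1,\dots,q_n)$ and that, under the isomorphism $Z_p(\mathfrak{g}_p)\cong\sym\mathfrak{g}_p$ ($\fr_p$-twisted) and a comparison of associated graded rings (recall $\gr\bar g_i=\bar f_i$), this ideal corresponds to $(\bar f_1,\dots,\bar f_n)$. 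Hence the image $\widetilde A_p$ of $Z(\mathfrak{U}\mathfrak{g}_p)$ in $B_p$ is isomorphic to $A_p$ as an algebra over $S_p$ taken through $\fr_p$, and—by Lemma \ref{key}, together with the fact that $(\bar f_i)$ is a Poisson ideal of $\sym\mathfrak{g}_p$ because the $f_i$ are $\mathfrak{g}$-invariant—it carries minus the Kirillov--Kostant bracket of $A_p$. Next I would rule out any further centre: $B_p$ is module-finite over $\widetilde A_p$, it is prime, and, the open symplectic leaf of $\spec A_p$ being dense (Assumption \ref{Assump}), the quantization $B_p$ is Azumaya over a dense open of $\spec\widetilde A_p$; combined with normality of $\widetilde A_p\cong A_p$ this forces $\mathrm{Frac}\,Z(B_p)=\mathrm{Frac}\,\widetilde A_p$, hence $Z(B_p)=\widetilde A_p$. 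Finally the PBW splitting $B_p=S_p\oplus\mathfrak{g}_pB_p$ identifies $\mathfrak{n}_p=Z(B_p)\cap\mathfrak{g}_pB_p$ with the augmentation ideal $\mathfrak{g}_pA_p$ of $\widetilde A_p\cong A_p$.

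\emph{Step 2: perfectness forces the augmentation.} Let $I\subseteq Z(B_p)=A_p$ be a Poisson ideal with $A_p/I=S_p$, that is, $I=\ker\chi$ for a Poisson $S_p$-point $\chi\colon A_p\to S_p$ of $\spec A_p$. Poisson-ness gives $\chi(\{a,b\})=0$ for all $a,b$; taking $a,b$ in the image of $\mathfrak{g}_p$ and using $\{a,b\}=\pm[a,b]$ yields $\chi\big([\mathfrak{g}_p,\mathfrak{g}_p]\big)=0$. As $\mathfrak{g}$ is perfect, $[\mathfrak{g}_p,\mathfrak{g}_p]=\mathfrak{g}_p$ for $p\gg0$, so $\chi$ vanishes on the image of $\mathfrak{g}_p$ in $A_p$, which generates the augmentation ideal $\mathfrak{g}_pA_p$; hence $\chi$ kills $\mathfrak{g}_pA_p$, and since $A_p/\mathfrak{g}_pA_p=S_p$ this forces $I=\mathfrak{g}_pA_p=\mathfrak{n}_p$. (Geometrically: over $\overline{\mathbb F}_p$ the only point of $\spec A$ whose maximal ideal is a Poisson ideal is the origin, because the Poisson bivector of $A$ at a point $\psi$ is, up to sign, the cocycle $(x,y)\mapsto\psi([x,y])$, which is nonzero unless $\psi$ annihilates $[\mathfrak{g},\mathfrak{g}]=\mathfrak{g}$.)

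The only place perfectness is used is the one-line cocycle computation of Step 2. The hard part will be Step 1—showing that $B_p$ contributes no central elements beyond $\widetilde A_p$; this is exactly where Assumption \ref{Assump} does real work (normality of $A_p$, density of the open symplectic leaf, hence generic Azumaya-ness of $B_p$), together with primeness of $B_p$, i.e.\ with $A_p$ being a domain for $p\gg0$.
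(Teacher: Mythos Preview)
Your argument is correct and follows the paper's overall strategy for the final claim: identify $Z(B_p)$ with $A_p$ carrying (minus) the Kirillov--Kostant bracket, then use perfectness to see that the augmentation is the only Poisson $S_p$-point. Your Step~2 is exactly the paper's argument in different words; the paper phrases it as: the restriction of $I$ to the $p$-center forces $g^p-g^{[p]}-\chi(g)\in I$ for a character $\chi:\mathfrak{g}_p\to S_p$, which must vanish since $\mathfrak{g}_p=[\mathfrak{g}_p,\mathfrak{g}_p]$.

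The one substantive difference is in Step~1, in how you rule out center beyond the image $\widetilde A_p$. You invoke generic Azumaya-ness of $B_p$ over the symplectic locus of $\spec\widetilde A_p$, plus normality of $\widetilde A_p\cong A_p$ and primeness of $B_p$, to conclude $Z(B_p)=\widetilde A_p$. The paper instead passes to the associated graded: it records at the outset (citing \cite{T}, and using precisely the normality and dense-symplectic-leaf hypotheses) that the Poisson center of $A_p=\gr B_p$ equals $A_p^p$, whence $\gr Z(B_p)\subseteq A_p^p$; since the image of $Z_p(\mathfrak{g}_p)$ already has associated graded $A_p^p$, equality follows. Both routes consume the same ingredients from Assumption~\ref{Assump}; the paper's is slightly more self-contained given the Poisson-center computation it has already made, while yours imports an Azumaya-locus statement (in the spirit of the same reference \cite{T}). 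Either is fine.
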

\begin{proof}

It is enough to verify above statements after a base change $S/pS\to \bold{k}$, where $\bold{k}$ is an algebraically closed field
of characteristic $p.$
At first, since $A_{\bold{k}}=\sym(\mathfrak{g}_{\bold{k}})/(\bar{f_1},\cdots,\bar{f_n})$ is a normal domain and the Poisson bracket is symplectic on a nonempty open subset of $\spec A_{\bold{k}},$
it follows easily that the Poisson center of $A_{\bold{k}}$ is 
$A_{\bold{k}}^p$ (see, for example, Lemma 2.4\cite{T}).
It  suffices to check that the Poisson center of $\sym\mathfrak{g}_{\bold{k}}$ is a free module
over  $(\sym\mathfrak{g}_{\bold{k}})^p$ with the basis $$\lbrace\bar{f_1}^{\alpha_1}\cdots\bar{f_n}^{\alpha_n} , 0\leq\alpha_i<p\rbrace.$$
Indeed, let $g$ be in the Poisson center of $\sym \mathfrak{g}_{\bold{k}}.$ 
Denote the ideal $(\bar{f_1},\cdots, \bar{f_n})\subset \sym\mathfrak{g}_{\bold{k}}$ by $I.$
 We claim that $g\in \sym(\mathfrak{g}_{\bold{k}})^p[\bar{f_1},\cdots, \bar{f_n}]+I^m,$ for all $m.$
 We proceed by induction on $m.$
Since $\bar{g}=g\mod I$ belongs to the Poisson center of 
$A_{\bold{k}},$ it follows that
$\bar{g}\in A_{\bold{k}}^p.$ Hence $g\in \sym(\mathfrak{g}_{\bold{k}})^p+I.$
Assume that $g\in \sym(\mathfrak{g}_{\bold{k}})^p[\bar{f_1},\cdots, \bar{f_n}]+I^m$ for some $m\geq 1.$
So, there exists $x\in g+\sym(\mathfrak{g}_{\bold{k}})^p[\bar{f_1},\cdots, \bar{f_n}],$ such that
$$x=\sum_{\alpha_1+\cdots+\alpha_n=m}\bar{f_1}^{\alpha_1}\cdots \bar{f_n}^{\alpha_n}x_{\alpha},\quad x_{\alpha}\in \sym (\mathfrak{g}_{\bold{k}}),.$$ 
Then, for any $y\in \mathfrak{g}_{\bold{k}}$, we have
$$0=\lbrace y, x\rbrace=\sum_i \bar{f_1}^{\alpha_1}\cdots \bar{f_n}^{\alpha_n}\lbrace y, x_{\alpha}\rbrace.$$
Since the sequence $(\bar{f_1},\cdots, \bar{f_n})$ is regular, we may conclude that 
$\bar{x_{\alpha}}=x_{\alpha}\mod I\in A_{\bold{k}}^p.$
Hence, $$g\in  \sym(\mathfrak{g}_{\bold{k}})^p[\bar{f_1},\cdots, \bar{f_n}]+I^{m+1}.$$
 Therefore,
$$g\in \bigcap_{m\geq1}(\sym(\mathfrak{g}_{\bold{k}})^p[\bar{f_1},\cdots, \bar{f_n}]+I^m)=\sym(\mathfrak{g}_{\bold{k}})^p[\bar{f_1},\cdots, \bar{f_n}].$$
 Now, suppose that 
 $$0= \sum_{\alpha_1,\cdots,\alpha_n}\bar{f_1}^{\alpha_1}\cdots \bar{f_n}^{\alpha_n}x_{\alpha}^p, x_{\alpha}\in \sym (\mathfrak{g}_{\bold{k}}),$$
Such that either $x_{\alpha}=0$ or $x_{\alpha}\notin I.$
Let $m$ be such that $x_{\alpha}=0$ for all $|\alpha|<m$ in the above sum. Since $I^m/I^{m+1}$
is a free $A_{\bold{k}}$-module with basis $\lbrace \bar{f_1}^{\alpha_1}\cdots\bar{f_n}^{\alpha_n}, \sum \alpha_i=m\rbrace,$
we get that $x_{\alpha}^p \mod I=0$, hence $x_{\alpha}\in I.$ So $x_{\alpha}=0.$
Thus, elements $\lbrace \bar{f_1}^{\alpha_1}\cdots\bar{f_n}^{\alpha_n} , 0\leq \alpha_i<p\rbrace$ are linearly independent over $(\sym\mathfrak{g}_{\bold{k}})^p$ 
as desired. It follows that $\mathfrak{g}_{\bold{k}}\bigoplus \oplus_{i=1}^n \bold{k}\bar{g_i}$ surjects onto
 $\mathcal{L}_{\bold{k}}(\mathfrak{g})=\mathfrak{m}_{\bold{k}}/\mathfrak{m}_{\bold{k}}^2,$
and since the center of $\mathfrak{g}_{\bold{k}}$ is trivial, we conclude that $\mathcal{L}_{\bold{k}}(\mathfrak{g})$ is a direct sum
of $\mathfrak{g}_{\bold{k}}$ with a central subalgebra spanned by images of $\bar{g}_i.$

Denote by $\tilde{f}_i$ the image of $\bar{f_i}^p$ under the isomorphism $\sym(\mathfrak{g}_{\bold{k}})\cong Z_p(\mathfrak{g}_{\bold{k}}).$
Then $\gr(\tilde{f_i})=\bar{f}_i^p.$
Since $A_{\bold{k}}$ is a domain, we have 
$$(\sym\mathfrak{g}_{\bold{k}})^p\cap I=\sum_i \bar{f_i}^p\sym\mathfrak{g}_{\bold{k}}.$$
Therefore $$Z_p(\mathfrak{g}_{\bold{k}})\cap (\bar{g_1},\cdots,\bar{g_n})=\sum \tilde{f_i}Z_p(\mathfrak{g}_{\bold{k}}).$$
 Hence, we conclude that the map $ i:\sym\mathfrak{g_{\bold{k}}}\to Z(B_{\bold{k}})$ induces an isomorphism $A_{\bold{k}}\cong  Z(B_{\bold{k}}).$
 In particular, $\mathcal{L}_{\bold{k}}'(\mathfrak{g})=\mathfrak{n}_{\bold{k}}{/\mathfrak{n}_{\bold{k}}}^2\cong\mathfrak{g}_{\bold{k}}$. 

Now let $I\subset Z(\mathfrak{U}\mathfrak{g}_p)$ be a Poisson ideal such that $Z(\mathfrak{U}\mathfrak{g}_p)/I=S_p.$
Proceeding as in the proof of Lemma \ref{unique},
without loss of generality we may assume that $(g^p-g^{[p]}, g\in \mathfrak{g}_p)\subset I.$
 Let $\bar{g_i}-a_i\in I, a_i\in S_p.$ In the above proof, replacing $\bar{g}_i$ with $\bar{g}_i-a_i,$
 we conclude that 
 $$I/I^2=\mathfrak{g}_{p}\oplus \sum_{i=1}^nS_p(\bar{g_i}-a_i).$$
If $\mathfrak{g}_p$ is perfect and
 $I$ is Poisson ideal in $B_p, $ such that $B_p/I=S/pS,$ then if follows that $g^p-g^{[p]}-\chi(g)\in I, g\in \mathfrak{g}_p,$
 such that $\chi:\mathfrak{g}_p\to S_p$ is a character. Hence, $\chi$ must be trivial.
 Since $g^p-g^{[p]}, g\in \mathfrak{g}_p$ generate $Z(B_p),$ we get that $I=\mathfrak{n}_p.$
\begin{example}
Let $\mathfrak{g}=\mathfrak{sl}_2$ with the usual generators $e, f, h.$
Then the center of $\mathfrak{U}\mathfrak{sl}_2(\mathbb{F}_p)$
is generated a $\mathbb{F}_p$-algebra by $x=e^p, y=f^p, z=h^p-h, \Delta=4fe+h^2+2h$ subject to the relation
$$(\Delta+1)^p-2(\Delta+1)^{\frac{p+1}{2}}+(\Delta+1)=4xy+z^2.$$
Thus the augmentation ideal $m_p= Z(\mathfrak{U}\mathfrak{g}_p)\cap \mathfrak{g}_p(\mathfrak{U}\mathfrak{g}_p)$
is generated by $x,y, z, \Delta$ and $m_p/m_p^2$ has an $\mathbb{F}_p$-basis
 $\lbrace\bar{x}, \bar{y}, \bar{z}, \bar{\Delta}\rbrace$--images of  $x, y, z,\Delta$ respectively. Thus we see that
$\mathcal{L}_p(\mathfrak{sl}_2)=m_p/m_p^2\cong\mathfrak{sl}_2(\mathbb{F}_p)\oplus \mathbb{F}_p\bar{\Delta}.$
Similarly $n_p$--the augmentation ideal of $Z(B_p)=Z(\mathfrak{U}\mathfrak{sl}_2(\mathbb{F}_p)/(\Delta)),$ is generated
by $x, y, z$, and $n_p/n_p^2=\mathbb{F}_p\bar{x}\oplus \mathbb{F}_p\bar{y}\oplus \mathbb{F}_p\bar{z}.$
So  $\mathcal{L}_p'(\mathfrak{sl}_2)\cong\mathfrak{sl}_2(\mathbb{F}_p).$
\end{example}

\end{proof}

\section{Derived isomorphisms}

As usual, $S$ is a finitely generated subring of $\mathbb{C}.$
Throughout, given two algebras $A, B,$ we say that they are derived equivalent if
the respective derived categories of bounded complexes of (left) modules are equivalent.
We  use the following easy consequence of \cite{R}.

\begin{lemma}\label{D}
Let $A, B$ be  flat $S$-algebras that are derived equivalent. Then $Z(A/pA)\cong Z(B/pB)$
as Poisson algebras for all $p>>0.$


\end{lemma}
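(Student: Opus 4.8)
The plan is to exploit the fact, due to Rickard \cite{R}, that a derived equivalence between two (suitably finite) $S$-algebras $A$ and $B$ is induced by a two-sided tilting complex $T$ in $D^b(A\otimes_S B^{\mathrm{op}})$, and that such a complex behaves well under base change: for $p \gg 0$ the reduction $T_p = T \otimes_S S/pS$ is a two-sided tilting complex inducing a derived equivalence between $A_p = A/pA$ and $B_p = B/pB$. (One needs $p$ large enough that all the relevant $S$-modules appearing in $T$, its convolution inverse, and the adjunction isomorphisms are flat over $S$ after inverting finitely many primes; this is where the ``$p\gg 0$'' enters.) Since a derived equivalence induces an isomorphism of the Hochschild cohomology rings, and in particular of the centers $Z(A_p) = \mathrm{HH}^0(A_p) \cong \mathrm{HH}^0(B_p) = Z(B_p)$ as rings, the first half of the statement — a ring isomorphism — is immediate.

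The substance of the lemma is that this isomorphism is Poisson for the deformation bracket. First I would recall that the deformation Poisson bracket on $Z(A_p)$ is intrinsic: given $a,b \in Z(A_p)$ with lifts $z,w \in A$, one has $\{a,b\} = \tfrac1p[z,w] \bmod p$, and this is exactly the first-order obstruction recording that $A$ is a flat lift of $A_p$ over $S/p^2 S$. The point is that the center-with-its-deformation-bracket is functorial for derived equivalences that themselves lift over $S/p^2S$. Concretely, the two-sided tilting complex $T$ lives over $S$, hence its reduction $T \otimes_S S/p^2 S$ is a two-sided tilting complex inducing a derived equivalence $D^b(A \otimes_S S/p^2 S) \simeq D^b(B \otimes_S S/p^2 S)$; this lifts the mod-$p$ equivalence. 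Therefore the induced ring isomorphism $Z(A_p) \cong Z(B_p)$ is itself the reduction mod $p$ of a ring isomorphism $Z(A \otimes_S S/p^2 S) \cong Z(B \otimes_S S/p^2 S)$.

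Given this, the Poisson compatibility is a formal consequence. Let $\phi\colon Z(A_p)\to Z(B_p)$ be the isomorphism and $\tilde\phi\colon Z(A\otimes S/p^2S)\to Z(B\otimes S/p^2S)$ its lift. Take $a,b\in Z(A_p)$, lift them to $\tilde a,\tilde b\in Z(A\otimes S/p^2S)$, and further to $z,w\in A$. Then $\phi(a),\phi(b)$ lift to $\tilde\phi(\tilde a),\tilde\phi(\tilde b)\in Z(B\otimes S/p^2S)$, which in turn lift to some $z',w'\in B$ with $z' \equiv \tilde\phi(\tilde a)$, $w'\equiv\tilde\phi(\tilde b) \pmod{p^2}$. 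Because $\tilde\phi$ is a ring homomorphism over $S/p^2S$, the elements $[z,w]/p$ and $[z',w']/p$ (each of which makes sense mod $p$ since $[z,w],[z',w']\in pA,\,pB$ respectively, their images in $A_p,B_p$ being brackets of central elements) correspond under $\phi$: one computes $\phi(\tfrac1p[z,w] \bmod p) = \tfrac1p[z',w']\bmod p$ directly by reducing the identity $\tilde\phi([\tilde z,\tilde w]) = [\tilde\phi\tilde z,\tilde\phi\tilde w]$ in $B\otimes S/p^2S$ — here $[\tilde z,\tilde w]$ denotes the commutator computed in $A\otimes S/p^2 S$, which equals $p\cdot(\tfrac1p[z,w]\bmod p)$. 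Hence $\phi$ intertwines the two deformation brackets, i.e. $Z(A_p)\cong Z(B_p)$ as Poisson algebras.

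The main obstacle is the bookkeeping in the middle step: making precise that Rickard's two-sided tilting complex can be chosen over $S$ and that reducing it mod $p^2$ (not just mod $p$) still yields an equivalence, so that the center isomorphism genuinely lifts to $S/p^2S$. This requires knowing that $A,B$ are, say, module-finite over $S$ (or at least that the tilting complex is perfect with finitely generated cohomology over $S$) so that flatness holds after inverting only finitely many primes; for enveloping algebras and their central reductions this finiteness over a suitable $S$ is automatic. Everything after that lift is established — it is purely the functoriality of $\mathrm{HH}^0$ together with the definition of the deformation bracket as a first-order flatness obstruction — so I do not expect further difficulty.
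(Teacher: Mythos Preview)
Your overall strategy---reduce the two-sided tilting complex modulo $p^2$ so that the mod $p$ derived equivalence lifts over $S/p^2S$, then deduce Poisson compatibility---is exactly the paper's. The execution of the final step, however, contains a genuine gap.

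You write ``lift them to $\tilde a,\tilde b\in Z(A\otimes S/p^2S)$.'' But the restriction map $Z(A/p^2A)\to Z(A_p)$ is not surjective in general, so such central lifts need not exist. Worse, if they \emph{did} exist your computation would collapse: since $\tilde a,\tilde b$ are central in $A/p^2A$ one has $[\tilde a,\tilde b]=0$, hence for any further lifts $z,w\in A$ with $z\equiv\tilde a$, $w\equiv\tilde b\pmod{p^2}$ one gets $[z,w]\in p^2A$, so $\{a,b\}=\tfrac{1}{p}[z,w]\bmod p=0$. Your argument therefore proves the deformation bracket is identically zero, which is of course false. The identity ``$\tilde\phi([\tilde z,\tilde w])=[\tilde\phi\tilde z,\tilde\phi\tilde w]$'' you invoke is the vacuous identity $\tilde\phi(0)=0$.

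The paper repairs this by staying cohomological. The short exact sequence $0\to A_p\to A/p^2A\to A_p\to 0$ of $A/p^2A$-bimodules yields a connecting map $i_0:\HH^0(A_p)\to\HH^1(A_p)$, and the deformation bracket is recovered as $\{a,-\}=i_0(a)|_{Z(A_p)}$ (inner derivations vanish on the center, so this restriction is well defined). The obstruction to lifting $a$ centrally to $A/p^2A$ is precisely $i_0(a)$; rather than lifting elements, one transports this obstruction. By \cite{R} the compatible derived equivalences modulo $p$ and modulo $p^2$---whose existence you correctly observed---intertwine the full $\HH^*$ together with the connecting maps $i_m$, and in particular $i_0$. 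That is the missing ingredient.
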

\begin{proof}
It follows that algebras $A/p^2A, B/p^2B$ are derived equivalent.
We have the following exact sequence of $A/p^2A$-bimodules
$$0\to A/pA\to A/p^2A\to A/pA\to 0,$$
where $A/pA\to A/pA$ is the quotient map and $A/pA\to A/p^2A$ is the multiplication by $p.$
It follows from \cite{R} that the connecting map of the Hochschild cohomologies 
$$i_m:\HH^m(A/pA)\to \HH^{m+1}(A/pA)$$ corresponding to
the exact sequence above commutes with isomorphisms of Hochschild cohomologies induced by the derived
equivalence 
$$\HH^*(A/p^2A)\cong \HH^*(B/p^2B),\quad \HH^*(A/pA)\cong \HH^*(B/p^2B).$$
Now, since the deformation Poisson bracket on $Z(A/pA)$ is defined as 
$$\lbrace a, -\rbrace=i_0(a)|_{Z(A/pA)}, a\in Z(A/pA),$$
the desired result follows.

\end{proof}

Now we can easily prove the following.
\begin{theorem}\label{Derived}
Suppose that Lie algebras $\mathfrak{g}, \mathfrak{g'}$ satisfy Assumption \ref{Assump}.
 If  $\mathfrak{U}\mathfrak{g}$ is derived equivalent to  $\mathfrak{U}\mathfrak{g'}$, then $\mathfrak{g}\cong \mathfrak{g'}.$

\end{theorem}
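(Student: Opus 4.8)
The plan is to combine Lemma~\ref{D} with Lemma~\ref{center}. Suppose $\mathfrak{U}\mathfrak{g}$ and $\mathfrak{U}\mathfrak{g}'$ are derived equivalent. Choose a common finitely generated subring $S\subset\mathbb{C}$ over which both $\mathfrak{g}_S,\mathfrak{g}'_S$ and their central elements $g_i,g'_j$ are defined, so that $\mathfrak{U}\mathfrak{g}_S$ and $\mathfrak{U}\mathfrak{g}'_S$ are flat $S$-algebras. By Lemma~\ref{D}, for all $p\gg 0$ there is an isomorphism of Poisson algebras $Z(\mathfrak{U}\mathfrak{g}_p)\cong Z(\mathfrak{U}\mathfrak{g}'_p)$. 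Such an isomorphism carries the unique maximal augmentation-type Poisson ideal to its counterpart; concretely, it must send the ideal $\mathfrak{m}_p$ (the Poisson ideal with quotient $S_p$, determined as in the proof of Lemma~\ref{center}) to the corresponding ideal $\mathfrak{m}'_p$, because these ideals can be characterized Poisson-algebraically. Hence it induces an isomorphism of $S_p$-Lie algebras $\mathfrak{m}_p/\mathfrak{m}_p^2\cong\mathfrak{m}'_p/(\mathfrak{m}'_p)^2$, i.e.\ $\mathcal{L}_p(\mathfrak{g})\cong\mathcal{L}_p(\mathfrak{g}')$ for all $p\gg0$, and passing to the product-over-$\bigoplus$ and then the direct limit over $S$, an isomorphism $\mathcal{L}_\infty(\mathfrak{g})\cong\mathcal{L}_\infty(\mathfrak{g}')$.

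Next I would strip off the central extension. By Lemma~\ref{center}, $\mathcal{L}_\infty(\mathfrak{g})$ is a trivial central extension of $\mathfrak{g}_{\mathbb{C}_\infty}$, so its center is exactly the central summand (one uses that $\mathfrak{g}$ has trivial center, hence $\mathfrak{g}_{\mathbb{C}_\infty}$ has no center, so the full center of $\mathcal{L}_\infty(\mathfrak{g})$ is the trivial central summand of dimension $n$); quotienting by the center recovers $\mathfrak{g}_{\mathbb{C}_\infty}$, and likewise for $\mathfrak{g}'$. Therefore $\mathfrak{g}_{\mathbb{C}_\infty}\cong\mathfrak{g}'_{\mathbb{C}_\infty}$ as $\mathbb{C}_\infty$-Lie algebras. (Alternatively one can run the same argument with $\mathcal{L}'_\infty$ in place of $\mathcal{L}_\infty$, which by Lemma~\ref{center} equals $\mathfrak{g}_{\mathbb{C}_\infty}$ on the nose once one checks the derived equivalence descends to the quotients $B_p$; but going through the center of $\mathcal{L}_\infty$ is cleaner since $\mathcal{L}_\infty$ is the functorially robust object.)

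Finally I would descend from $\mathbb{C}_\infty$ back to $\mathbb{C}$. The Lie algebras $\mathfrak{g},\mathfrak{g}'$ are finite dimensional over $\mathbb{C}$, and an isomorphism $\mathfrak{g}_{\mathbb{C}_\infty}\cong\mathfrak{g}'_{\mathbb{C}_\infty}$ is a point of the scheme $\mathrm{Isom}(\mathfrak{g},\mathfrak{g}')$ of Lie algebra isomorphisms with values in $\mathbb{C}_\infty$. This scheme is of finite type over $\mathbb{C}$; since $\mathbb{C}_\infty$ contains $\mathbb{C}$ and is a field, by the Nullstellensatz (or simply because a finite-type $\mathbb{C}$-scheme with a point over any field extension of $\mathbb{C}$ has a $\mathbb{C}$-point, $\mathbb{C}$ being algebraically closed) this scheme has a $\mathbb{C}$-point, i.e.\ $\mathfrak{g}\cong\mathfrak{g}'$ over $\mathbb{C}$.

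The main obstacle I anticipate is the second sentence of the first paragraph: verifying that an arbitrary Poisson algebra isomorphism $Z(\mathfrak{U}\mathfrak{g}_p)\cong Z(\mathfrak{U}\mathfrak{g}'_p)$ must match up the relevant ideals. One wants a purely Poisson-theoretic characterization of $\mathfrak{m}_p$ — e.g.\ as (the radical of, or the preimage under Frobenius of) the ideal defining the closure of a symplectic leaf through a ``generic'' point, or via the intrinsic description used in Lemma~\ref{unique} and the proof of Lemma~\ref{center} that the Poisson ideals with quotient $S_p$ are all conjugate and have isomorphic conormal Lie algebras. Since Lemma~\ref{center} already shows $I/I^2$ is independent (up to isomorphism) of the choice of such Poisson ideal $I$ with $Z(\mathfrak{U}\mathfrak{g}_p)/I = S_p$, it suffices to know the transported ideal is \emph{some} Poisson ideal with residue field $S_p$ of the right ``size'', which follows because the isomorphism is $S$-linear after enlarging $S$; so in fact the obstacle largely dissolves, and the proof should be short.
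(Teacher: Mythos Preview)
Your approach is essentially the paper's own: apply Lemma~\ref{D} to get a Poisson isomorphism of centers mod~$p$, transport \emph{some} Poisson ideal with residue ring $S_p$ across, invoke Lemma~\ref{center} to identify the conormal Lie algebras as trivial central extensions of $\mathfrak{g}_p$ and $\mathfrak{g}'_p$, and then descend. Your self-identified ``obstacle'' is resolved exactly as you say in the last paragraph, and this is precisely what the paper does (it never claims $\mathfrak{m}_p$ maps to $\mathfrak{m}'_p$, only that the image $I'$ is a Poisson ideal with the correct residue ring, which suffices by Lemma~\ref{center}). One small correction: $\mathbb{C}_\infty$ is not a field (e.g.\ already $(\prod_p\mathbb{F}_p)/(\bigoplus_p\mathbb{F}_p)$ has nontrivial idempotents), but your descent step survives unchanged since $\mathbb{C}_\infty$ is a nonzero $\mathbb{C}$-algebra, so the finite-type $\mathbb{C}$-scheme $\mathrm{Isom}(\mathfrak{g},\mathfrak{g}')$ has a point over some residue field of $\mathbb{C}_\infty$ and hence a $\mathbb{C}$-point.
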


\begin{proof}

 There exists a finitely generated subring $S$ of $\mathbb{C}$, and Lie algebras
 $\mathfrak{g}_S, \mathfrak{g'}_S$ over $S$, such that 
 $$\mathfrak{g}=\mathfrak{g}_S\otimes_S\mathbb{C},\quad \mathfrak{g'}=\mathfrak{g'}_S\otimes_S\mathbb{C},$$
and $\mathfrak{U}(\mathfrak{g}_S)$ is derived equivalent to $ \mathfrak{U}(\mathfrak{g'}_S).$
Hence, 
$$Z(\mathfrak{U}(\mathfrak{g}_{S_p}))\cong Z(\mathfrak{U}(\mathfrak{g'}_{S_p})).$$
as Poisson algebras by Lemma \ref{D}.
Let $I\subset Z(\mathfrak{U}(\mathfrak{g}_{S_p}))$ be a Poisson ideal
 such that $Z(\mathfrak{U}(\mathfrak{g}_{S_p}))/I=S_p.$
Let $I'\subset Z(\mathfrak{U}(\mathfrak{g'}_{S_p}))$ be the corresponding ideal under the above isomorphism.
Hence $I/I^2\cong I'/I'^2$ as $S_p$-Lie algebras.
 Now using Lemma \ref{center} we conclude that trivial central extensions of 
 $(\mathfrak{g})_{\mathbb{C}_\infty}$ and $ (\mathfrak{g'})_{\mathbb{C}_\infty}$
 are isomorphic. Since $\dim\mathfrak{g}=\dim\mathfrak{g'}$,
this implies that $\mathfrak{g}\cong \mathfrak{g'}.$
\end{proof}



\section{The homomorphisms $D, \tilde{D}$}\label{4}

 We  assume that $\mathfrak{g}$ is a perfect Lie algebra over $\mathbb{C}$ satisfying Assumption \ref{Assump}. 
Let $S$ be a large enough finitely generated subring of $\mathbb{C}$, and let $\mathfrak{g}_S$
 be a model of $\mathfrak{g}$ over $S$ (just as in the paragraph preceding Lemma \ref{center}).
Then we  construct canonical homomorphisms 
$$ D_S: \Aut(\mathfrak{U}\mathfrak{g}_S)\to \Aut(\mathfrak{g}_{S_{\infty}}),\quad \tilde{D}_S:\Aut(B_S)\to \Aut(\mathfrak{g}_{S_{\infty}}),$$
as follows. At first, remark that since $\mathfrak{g}$ is perfect, any automorphism of $\mathfrak{U}\mathfrak{g}$ must
preserve the ideal $Z(\mathfrak{U}\mathfrak{g})\cap \mathfrak{g}\mathfrak{U}\mathfrak{g}.$ Therefore we have
the restriction homomorphism $\Aut(\mathfrak{U}\mathfrak{g})\to\Aut(B).$
Let $p>>0$ be a sufficiently large prime.
Let $\phi\in \Aut(B_S).$ 
Reducing $\phi \mod p,$ we obtain $\bar{\phi}\in \Aut(Z(B_p)).$
Since $B_{p}$ is obtained by the reduction modulo $p$, we have the corresponding deformation Poisson bracket
 on its center. Hence, $\bar{\phi}$ preserves the Poisson
bracket on $Z(B_{p}).$
Now it follows from Lemma \ref{unique} that $\bar{\phi}$ preserves $\mathfrak{n}_p,$ thus it induces a Lie algebra automorphism on 
$\mathfrak{n}_p/\mathfrak{n}_p^2\cong \mathfrak{g}_p,$ which we  denote by  $(\tilde{D}_{S_p})(\phi).$
Hence, we obtain a canonical homomorphism $\tilde{D}_{S_p}:\Aut(B_S)\to\Aut(\mathfrak{g}_p).$
Also, given a base change $S_p\to\bold{k}$, we  denote the corresponding homomorphism 
$\tilde{D}_{S_p}\otimes_{S_p}\bold{k}:\Aut(B_S)\to\Aut(\mathfrak{g}_{\bold{k}})$
by $\tilde{D}_{\bold{k}}.$ Also, denote by $D_{\bold{k}}:\Aut(\mathfrak{U}\mathfrak{g}_S)\to\Aut(\mathfrak{g}_{\bold{k}})$
the composition of $\tilde{D}_{\bold{k}}$ with the restriction $\Aut(\mathfrak{U}\mathfrak{g}_S)\to \Aut(B_S).$
The element $\prod_p (\tilde{D}_{S_p})(\phi)$  gives rise
to an element of $\Aut(\mathfrak{g}_{S_{\infty}})$,
which we denote by $\tilde{D}_S(\phi).$ This way we obtain the desired homomorphisms
$\tilde{D}_S.$
We define $D_S:\Aut(\mathfrak{U}\mathfrak{g}_S)\to \Aut(\mathfrak{g}_{S_{\infty}})$ as the composition
of $\tilde{D_S}$ with the restriction $\Aut(\mathfrak{U}\mathfrak{g}_S)\to \Aut(B_S).$

Now, taking the direct limit of the above homomorphisms $\tilde{D}_S, D_S$ over finitely generated subsrings $S\subset\mathbb{C},$
  we obtain the sought after homomorphisms
$$\tilde{D}:\Aut(B)\to \Aut(\mathfrak{g}_{\mathbb{C}_{\infty}}),
\quad D:\Aut(\mathfrak{U}\mathfrak{g})\to \Aut(\mathfrak{g}_{\mathbb{C}_{\infty}}).$$
It is clear from the construction and Lemma\ref{center} that $D(\Aut(\mathfrak{g}))=\fr_{\infty}^{*},$
 where $\fr_{\infty}:\mathbb{C}\to \mathbb{C}_{\infty}$ is the canonical inclusion
followed by the Frobenius map.



\section{The proof}




We  start by the following.
\begin{proposition}\label{kernel*}
Let $\mathfrak{g}$ be a perfect Lie algebra satisfying Assumption \ref{Assump}.
Then the kernel of the restriction homomorphism $\mathrm{Aut}(\mathfrak{U}\mathfrak{g})\to \mathrm{Aut}(B)$ contains
no nontrivial semi-simple automorphisms.
\end{proposition}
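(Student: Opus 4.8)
The plan is to reduce the statement, via an eigenspace decomposition, to the claim that an element of the kernel acts trivially on $\mathfrak{a}/\mathfrak{a}^2$ for the augmentation ideal $\mathfrak{a}$ of the center, and then to settle that by reduction modulo a large prime together with Lemma~\ref{center}. I first record some structure. By Assumption~\ref{Assump}, writing $g_1,\dots,g_n$ for the images of $f_1,\dots,f_n$ under the symmetrization isomorphism, we have $Z(\mathfrak{U}\mathfrak{g})=\C[g_1,\dots,g_n]$ and $B=\mathfrak{U}\mathfrak{g}/J$ with $J=(g_1,\dots,g_n)\mathfrak{U}\mathfrak{g}$; since the $f_i$ form a regular sequence in $\sym(\mathfrak{g})$, the latter is free over $\C[f_1,\dots,f_n]$ (Koszul), so $\mathfrak{U}\mathfrak{g}$ is free over $Z(\mathfrak{U}\mathfrak{g})$. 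Hence $J^k\cap Z(\mathfrak{U}\mathfrak{g})=(g_1,\dots,g_n)^k$ for all $k$ and $\bigcap_k J^k=0$ (the latter also following from the Krull intersection theorem, $J$ being generated by central elements of a Noetherian domain). Moreover, since $\mathfrak{g}$ is perfect the counit is the unique $\C$-algebra homomorphism $\mathfrak{U}\mathfrak{g}\to\C$, so its kernel $\mathfrak{g}\mathfrak{U}\mathfrak{g}$, and with it the augmentation ideal $\mathfrak{a}:=Z(\mathfrak{U}\mathfrak{g})\cap\mathfrak{g}\mathfrak{U}\mathfrak{g}=(g_1,\dots,g_n)$ of the center, is preserved by every automorphism of $\mathfrak{U}\mathfrak{g}$.

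Now take $\phi$ in the kernel of $\Aut(\mathfrak{U}\mathfrak{g})\to\Aut(B)$ which is semi-simple, and decompose $\mathfrak{U}\mathfrak{g}=\bigoplus_\zeta U_\zeta$ into $\phi$-eigenspaces; the hypothesis $\phi|_B=\mathrm{id}$ means exactly that $U_\zeta\subseteq J$ for every $\zeta\ne1$. The key point is that
\[
(\phi-\mathrm{id})(J^k)\subseteq J^{k+1}\qquad\text{for all }k\ge0\quad(J^0:=\mathfrak{U}\mathfrak{g}).
\]
Granting this, $(\phi-\mathrm{id})^k(\mathfrak{U}\mathfrak{g})\subseteq J^k$, so for $\zeta\ne1$ and $x\in U_\zeta$ one has $(\zeta-1)^k x=(\phi-\mathrm{id})^k(x)\in J^k$ for every $k$, hence $x\in\bigcap_k J^k=0$; thus all $U_\zeta$ with $\zeta\ne1$ vanish and $\phi=\mathrm{id}$. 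For the displayed inclusion, the case $k=0$ is the hypothesis, and using $(\phi-\mathrm{id})(ab)=(\phi-\mathrm{id})(a)\phi(b)+a(\phi-\mathrm{id})(b)$, the centrality of the $g_i$, and induction on $k$, the inductive step comes down to the single assertion $\phi(g_i)-g_i\in J^2$ — equivalently, since $\phi(g_i)-g_i$ is central, that $\phi$ acts trivially on $\mathfrak{a}/\mathfrak{a}^2$. This is the one place where semi-simplicity is genuinely needed: for a general element of the kernel the assertion is false.

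Establishing that $\phi$ acts trivially on $\mathfrak{a}/\mathfrak{a}^2$ is the crux, and I would do it by reducing modulo a large prime $p$. Then $\bar\phi\in\Aut(\mathfrak{U}\mathfrak{g}_p)$ is again semi-simple, is the identity on $B_p$, and is a Poisson automorphism of $Z(\mathfrak{U}\mathfrak{g}_p)$ preserving $\mathfrak{m}_p$. By Lemma~\ref{center}, $\mathfrak{m}_p/\mathfrak{m}_p^2$ is a trivial central extension $\mathfrak{g}_p\oplus\mathfrak{z}$ with $\mathfrak{g}_p=[\mathfrak{m}_p/\mathfrak{m}_p^2,\mathfrak{m}_p/\mathfrak{m}_p^2]$ and $\mathfrak{z}$ spanned by the classes of $\bar g_1,\dots,\bar g_n$; as the projection $\mathfrak{m}_p/\mathfrak{m}_p^2\to\mathfrak{n}_p/\mathfrak{n}_p^2\cong\mathfrak{g}_p$ is $\bar\phi$-equivariant with trivial action on the target, the subspace $\mathfrak{z}$ is $\bar\phi$-stable and $\bar\phi$ is the identity on $\mathfrak{g}_p$, so it remains to see that $\bar\phi$ is the identity on $\mathfrak{z}$. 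For this one passes to the graded ring $\gr_{\mathfrak{m}_p}Z(\mathfrak{U}\mathfrak{g}_p)$: the relations of Lemma~\ref{center}, of the shape $\bar g_i^{\,p}=\tilde f_i+(\text{terms of lower order in the }\bar g_j)$ with $\tilde f_i\in Z_p(\mathfrak{g}_p)$ of $\mathfrak{m}_p$-order $\deg f_i<p$ and nonzero symbol in $\sym(\mathfrak{g}_p)$, produce homogeneous relations expressing powers of the $\bar g_i$ through $\sym(\mathfrak{g}_p)$; since $\bar\phi$ fixes $\sym(\mathfrak{g}_p)$ pointwise and is a ring automorphism, compatibility with these relations forces the eigenvalues of $\bar\phi$ on $\mathfrak{z}$ to be $1$. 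Passing to the limit over $p$ gives $\phi(g_i)\equiv g_i\pmod{\mathfrak{a}^2}$, and the proof is complete. I expect the real obstacle to be exactly this last rigidity step — excluding that a semi-simple $\bar\phi$ merely rescales (say by a sign) the leading terms $\bar g_i$ — for which one must use the full ring and Poisson structure of $Z(\mathfrak{U}\mathfrak{g}_p)$, not merely its Lie-algebra shadow $\mathfrak{m}_p/\mathfrak{m}_p^2$.
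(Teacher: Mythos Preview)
Your reduction is the same as the paper's: once you know $\phi$ acts trivially on $\mathfrak a/\mathfrak a^2$ (equivalently on $I/I^2$, $I=(g_1,\dots,g_n)$), it acts trivially on every $I^k/I^{k+1}$, and semi\-simplicity together with $\bigcap_k I^k=0$ finishes the proof. The divergence, and the gap, is entirely in how you establish triviality on $\mathfrak a/\mathfrak a^2$.

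Your mod-$p$ argument does not close. In the $\mathfrak m_p$-adic associated graded the relation you invoke does not take the form ``$[\bar g_i]^p=\tilde f_i$'': the left side has $\mathfrak m_p$-order $p$ while $\tilde f_i$ has order $d_i=\deg f_i<p$, so the homogeneous relation one actually extracts in low degree is of the shape $[\bar g_i]^{d_i}\in\sym^{d_i}(\mathfrak g_p)$ (for $\mathfrak{sl}_2$ one gets $[\Delta]^2=4f_1$). This constrains the eigenvalues of $\bar\phi$ on $\mathfrak z$ only to be $d_i$-th roots of unity, exactly the sign/root ambiguity you flag at the end; the Poisson bracket is no help since the $\bar g_i$ are Poisson-central. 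So the crucial step is genuinely unfinished. Incidentally, your assertion that ``for a general element of the kernel the assertion is false'' is not correct: triviality on $\mathfrak a/\mathfrak a^2$ in fact holds for \emph{every} $\phi$ in the kernel, semi\-simple or not; the nontrivial kernel elements (e.g.\ those coming from Joseph's wild automorphism for $\mathfrak{sl}_2$) are unipotent with respect to the $I$-adic filtration.

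The paper avoids characteristic $p$ here entirely and works directly in $\mathbb C$. Set $B'=\mathfrak U\mathfrak g/I^2$ and consider the square-zero extension $0\to I/I^2\to B'\to B\to 0$, whose class is $\omega=\sum_i\omega_i\bar g_i\in \HH^2(B,I/I^2)=\bigoplus_i\HH^2(B)\bar g_i$. A short direct argument using only the regularity of $(g_1,\dots,g_n)$ shows the $\omega_i\in\HH^2(B)$ are linearly independent: if $\sum c_i\omega_i=0$ with some $c_j\ne0$, one produces an algebra splitting of $B'/\sum_{i\ne j}B'\bar g_i\to B$, which forces $g_j\in\sum_{i\ne j}g_i\,\mathfrak U\mathfrak g+g_j\,\mathfrak g\,\mathfrak U\mathfrak g$, contradicting regularity. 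Now any $\phi$ in the kernel restricts to an automorphism $\tilde\phi$ of $B'$ descending to $\id_B$; hence $\tilde\phi$ fixes $\omega$, and since it acts on $I/I^2$ by a scalar matrix $(\phi_{ij})$ (because $\phi$ preserves $Z(\mathfrak U\mathfrak g)=\mathbb C[g_1,\dots,g_n]$ and is the identity on $B$), linear independence of the $\omega_i$ gives $(\phi_{ij})=\id$. This delivers exactly the missing input $\phi(g_i)\equiv g_i\pmod{\mathfrak a^2}$, after which your filtration argument (which is also the paper's) completes the proof.
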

We remark that in general, the  homomorphism $\Aut(\mathfrak{U}\mathfrak{g})\to \Aut(B)$
  is not injective.
For example, in the case of  $\mathfrak{g}=\mathfrak{sl}_2,$ this follows from existence 
of a non tame automorphism of $\mathfrak{U}\mathfrak{sl}_2,$ proved by Joseph \cite{J1}. 

For the proof of  Lemma\ref{kernel*}, we need a specific set of linearly independent elements of $\HH^2(B)$.
 Recall that for a semi-simple $\mathfrak{g}$, one has  $\dim \HH^2(B)=n,$ as follows immediately from Soergel's result \cite{S}.

Recall that $Z(\mathfrak{U}\mathfrak{g})=\mathbb{C}[g_1,\cdots, g_n], g_i\in\mathfrak{g}\mathfrak{U}\mathfrak{g}$ and $(g_1,\cdots,g_n)$ is a regular
sequence in $\mathfrak{U}\mathfrak{g}.$
Put $I=(g_1,\cdots,g_n).$ So $B=\mathfrak{U}\mathfrak{g}/I$. Let us put  $B'=\mathfrak{U}\mathfrak{g}/I^{2}$. 
Then we have a short exact sequence $$0\to I/I^2\to B'\to B\to 0.$$ 
We have $I/I^2=\bigoplus_{i=1}^n B\bar{g_i},$ where $\bar{g_i}$ denotes the image of $g_i$ under the quotient map $\mathfrak{U}\mathfrak{g}\to B'.$
Denote by $$\omega\in \HH^2(B, I/I^2)=\bigoplus_{i=1}^n \HH^2(B)\bar{g_i}$$ the cohomology class  corresponding to the above short exact sequence.
Let us put $\omega=\sum_{i=1}^n \omega_i\bar{g_i}$ where $\omega_i\in \HH^2(B).$
Under these notations we have the following.
\begin{lemma}\label{independent}
Elements $\omega_i, 1\leq i\leq n$ are linearly independent . 
\end{lemma}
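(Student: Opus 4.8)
The plan is to identify the class $\omega \in \HH^2(B, I/I^2) = \bigoplus_{i=1}^n \HH^2(B)\bar g_i$ of the extension $0 \to I/I^2 \to B' \to B \to 0$ as the obstruction to deforming $B$ along the family $\mathfrak{U}\mathfrak{g}/\mathfrak{a}$ as $\mathfrak{a}$ ranges over ideals of $Z(\mathfrak{U}\mathfrak{g})$ of colength $1$ near the augmentation ideal, and to read off the components $\omega_i$ from the Kodaira--Spencer map of this family. Concretely, $\spec Z(\mathfrak{U}\mathfrak{g}) = \mathbb{A}^n$ with coordinates $g_1,\dots,g_n$, and the central reductions $B_\lambda = \mathfrak{U}\mathfrak{g}/(g_1-\lambda_1,\dots,g_n-\lambda_n)$ form a flat family over this $\mathbb{A}^n$ with $B_0 = B$. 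First I would argue that the class of the first-order neighbourhood of $0$ in this family — that is, the restriction of the family to $\spec Z(\mathfrak{U}\mathfrak{g})/(g_1,\dots,g_n)^2$ — is precisely the extension class $\omega$ above, and that its $\bar g_i$-component $\omega_i \in \HH^2(B)$ is the image of $\partial/\partial g_i$ under the Kodaira--Spencer map $T_0 \mathbb{A}^n \to \HH^2(B)$ for this family.

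Given that, the statement that $\omega_1,\dots,\omega_n$ are linearly independent is equivalent to the Kodaira--Spencer map being injective, i.e. that the family $\{B_\lambda\}$ is (infinitesimally) versal in the $n$ transverse directions. The cleanest way I see to prove injectivity is via the associated graded / PBW degeneration: $\gr B_\lambda \cong \sym(\mathfrak{g})/(f_1,\dots,f_n) = A$ for every $\lambda$, and more relevantly $\gr(\text{family}) = \sym(\mathfrak{g})$ viewed as flat over $\sym(\mathfrak{g})^{\mathfrak{g}} = \mathbb{C}[f_1,\dots,f_n]$ via the $f_i$. Since $(f_1,\dots,f_n)$ is a regular sequence in $\sym(\mathfrak{g})$ (Assumption \ref{Assump}), the Koszul complex on the $f_i$ resolves $A$, and the conormal module $(f_1,\dots,f_n)/(f_1,\dots,f_n)^2 = \bigoplus A f_i$ is free of rank $n$; the classical deformation-theoretic fact is then that the Kodaira--Spencer map for a complete intersection is injective precisely because the $f_i$ form a regular sequence — no $f_i$ lies in the ideal generated by the others plus $(f_1,\dots,f_n)^2$. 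I would transport this from $A$ back up to $B$ by a spectral sequence / filtered argument: the filtration on $B$ induces one on $\HH^2(B)$ with $\gr$ injecting (in the relevant degrees) into Poisson or Hochschild cohomology of $A$, and the classes $\gr \omega_i$ map to the corresponding linearly independent conormal-obstruction classes for $A$, hence the $\omega_i$ themselves are independent.

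An alternative, and perhaps more self-contained, route avoids cohomology computations entirely: suppose $\sum c_i \omega_i = 0$ with $c_i \in \mathbb{C}$ not all zero. Then the extension $0 \to I/I^2 \to B' \to B \to 0$ admits, after pushing out along the functional $(a_i) \mapsto \sum c_i a_i : I/I^2 \to B$, a splitting — equivalently, there is a first-order direction $v = \sum c_i \partial/\partial g_i$ in which the family $B_\lambda$ is trivial to first order, meaning $B' / (\text{the sub-bimodule } \{(a_i): \sum c_i a_i = 0\}) \cong B[\epsilon]$ as algebras. Unwinding, this would force $\sum c_i g_i$ to lie in $I^2 = (g_1,\dots,g_n)^2$ inside $\mathfrak{U}\mathfrak{g}$ modulo lower-order corrections, and passing to $\gr$ gives $\sum c_i f_i \in (f_1,\dots,f_n)^2$ in $\sym(\mathfrak{g})$, contradicting regularity of the sequence $(f_1,\dots,f_n)$ (a regular sequence is in particular a minimal generating set of the ideal it generates, so $f_i \notin (f_1,\dots,f_n)\cdot(f_1,\dots,f_n)$). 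I expect the main obstacle to be making the identification ``$\sum c_i\omega_i = 0 \iff \sum c_i g_i \in I^2$ up to a bimodule-triviality correction'' fully rigorous: one must be careful that $\HH^2$ here is Hochschild cohomology of the noncommutative algebra $B$, that the extension $B'$ is of \emph{algebras} not just bimodules, and that the passage to $\gr$ is compatible with all of this — in particular that no cancellation can occur between the leading term $\sum c_i f_i$ and genuinely lower-order PBW corrections. This is where I would spend the bulk of the care, using the PBW filtration and the fact that $\gr g_i = f_i$ together with the regularity hypothesis to control the leading symbols.
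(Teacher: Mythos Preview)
Your second approach is close in spirit to the paper's, but the paper executes it more directly and avoids exactly the difficulty you flag. The paper does not pass to $\gr$ at all: it uses the regularity of $(g_1,\dots,g_n)$ in $\mathfrak{U}\mathfrak{g}$ itself (which follows from regularity of the $f_i$ in $\sym\mathfrak{g}$ by the standard filtered argument). Concretely, from $\sum c_i\omega_i = 0$ with $c_j\neq 0$ the paper produces a one–dimensional square–zero extension $B_1\to B$ (the quotient of $B'$ in the $j$-th direction) that splits, writes the splitting on generators as $\psi(x)=x+\theta(x)\bar g_j$ for $x\in\mathfrak{g}$, and then uses the fact that $g_j\in\mathfrak{g}\,\mathfrak{U}\mathfrak{g}$ to read off the relation $g_j\in\sum_{i\neq j} g_i\,\mathfrak{U}\mathfrak{g}+g_j\,\mathfrak{g}\,\mathfrak{U}\mathfrak{g}$ inside $\mathfrak{U}\mathfrak{g}$. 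Regularity of the $g_i$ finishes it. So the translation ``splitting $\Rightarrow$ element-level relation'' that you were worried about is done by evaluating the splitting on the specific central element $g_j$ written in terms of the Lie algebra generators --- no associated-graded bookkeeping is required.

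The specific element-level conclusion you aimed for, ``$\sum c_i g_i\in I^2$ modulo lower-order corrections'', is not what a splitting of the pushed-out extension gives you. A splitting is an algebra section $\psi:B\to B[\epsilon]$, and this does not force any central element into $I^2$; rather it produces a relation of the shape $(\text{unit})\cdot g_j\in\sum_{i\neq j}g_i\,\mathfrak{U}\mathfrak{g}$, which is precisely where regularity applies. Your planned reduction to $\sum c_i f_i\in(f_1,\dots,f_n)^2$ in $\sym\mathfrak{g}$ is therefore aiming at the wrong target and would not go through as stated. Your first approach via Kodaira--Spencer and a filtered spectral sequence could in principle be made to work, but it is substantially heavier than what is needed and the paper's argument shows it is unnecessary.
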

\begin{proof}
Let $\sum_{i=1}^n c_i\omega_i=0, c_i\in \mathbb{C}.$ Let $c_j\neq 0$ for some $j.$
Let $B_1$ be the quotient of $B'$ by  the ideal $\sum_{i\neq j}B'\bar{g_i}.$
Then it follows that the quotient map $B_1\to B_1/B_1\bar{g_j}=B$ admits a $\mathbb{C}$-algebra splitting.
Let $\psi:B\to B_1$ be such a splitting. Let us write $$\psi(x)=x+\theta(x)\bar{g_j},\quad x\in \mathfrak{g},\quad \theta(x)\in B.$$
This implies that $\bar{g_j}\in \bar{g_j}(\mathfrak{g}B').$
 Therefore $$g_j\in \sum_{i\neq j}g_i\mathfrak{U}\mathfrak{g}+g_j\mathfrak{g}\mathfrak{U}\mathfrak{g}.$$
Thus, there exists $\alpha\in \mathfrak{U}\mathfrak{g}\setminus  \mathfrak{g}\mathfrak{U}\mathfrak{g}$ such that
$\alpha g_j\in \sum_{i\neq j}g_i\mathfrak{U}\mathfrak{g}.$ Now the regularity of the sequence $(g_1,\cdots, g_n)$ implies that
$\alpha\in I$, which is a contradiction.
 \end{proof}

\begin{proof}[Proof of  Proposition\ref{kernel*}]
Let $\phi$ be a semi-simple automorphism of $\mathfrak{U}\mathfrak{g}$ that restricts to the identity on $B.$
Denote by $\tilde{\phi}$ the restriction of $\phi$ on $B'.$
Therefore $\tilde{\phi}$ fixes $\omega=\sum \omega_i\bar{g_i}\in H^2(B, I/I^2).$
Let $\tilde{\phi}:I/I^2\to I/I^2$ be represented by the matrix 
$$(\phi_{ij}), \phi_{ij}\in \mathbb{C}, \quad \tilde{\phi}(\bar{g_i})=\sum \phi_{ji}\bar{g_j}.$$
Entries $\phi_{ij}$ are scalars since $\phi$  preserves $\mathbb{C}[g_1,\cdots,g_n].$
Thus $\sum \phi_{ij}\omega_j=\omega_i.$
Since $\omega_j$ are linearly independent by Lemma \ref{independent}, we conclude that $\phi|_{I/I^2}=\id.$ Hence $\phi|_{I^n/I^{n+1}}=\id$ for all $n.$
Since $\phi$ is semi-simple, we get that $\phi=\id.$ 
\end{proof}

\begin{remark}
It was proved by Polo\cite{P}
that when $\mathfrak{g}$ is semi-simple, the action of an automorphism of $\mathfrak{U}\mathfrak{g}$ on its center is given by a Dynkin diagram automorphism
of $\mathfrak{g}.$

\begin{proof}[Proof of Theorem \ref{kernel}.]

In view of Proposition \ref{kernel*}, it suffices to check that $\ker(\tilde{D})$ has no nontrivial semi-simple automorphisms.
Assume that $\phi\in\Aut(B)$ is a non-trivial semi-simple automorphism.
Therefore there exists a finitely generated subring $S\subset\mathbb{C},$ 
and a finite free $\phi$-invariant $S$-submodule $V\subset B_S,$
such that $\phi|_{V}$ is semi-simple over $S$ and $V$ generates $B_S$ as an $S$-algebra.
(we are using notations from the paragraph preceding Lemma \ref{center}).
We  show that for sufficiently large $S,$ and for all $p>>0,$
given any homomorphism $S_p\to \bold{k}$, where $\bold{k}$ is a field,
then $\tilde{D}_{\bold{k}}(\phi)\neq \id_{\mathfrak{g}_{\bold{k}}}.$
Let $y\in V, 1\neq\alpha\in S, $ such that $\phi(y)=\alpha y.$ 
Let us write $y=\sum a_ie_i, 0\neq a_i\in S,$ where $e_i$ are basis elements of $B_S$ as a free $S$-module.
We may assume that $a_i, 1-\alpha$ are invertible is $S.$ Now let $p>>0$ and
$\rho:S\to \bold{k}$ be a base change to a field such that 
$\tilde{D}_{\bold{k}}=\id_{\mathfrak{g}_{\bold{k}}}.$
Thus, we have a non-trivial semi-simple automorphism $\bar{\phi}\in \Aut(B_{\bold{k}}),$ such that
$\bar{\phi|}_{m_{\bold{k}}/m_{\bold{k}}^2}=\id.$
 Then $\bar{\phi}$ acts trivially on $m_{\bold{k}}^n/m_{\bold{k}}^{n+1}$ for all $n.$
Since the action of $\phi$ on $Z(B_{\bf{k}})$ is semi-simple, it follows that the action of $\bar{\phi}$ on $Z(B_{\bf{k}})$ is trivial.
Then by the Noether-Skolem theorem there exists $a\in B_{\bf{k}},$ such that
$$\phi(x)=axa^{-1}, x\in B_{\bf{k}}.$$ 
But, since $\bar{\phi}(\bar{y})=\bar{\alpha}\bar{y}$ and $1\neq \bar{\alpha}\in \bold{k}, 0\neq \bar{y}\in B_{\bold{k}}$, we 
get that $a\bar{y}=\bar{\alpha} \bar{y}a.$ Recall that under the PBW filtration on $B_{\bf{k}}$,
$\gr(B_{\bold{k}})=A_{\bold{k}}$ is a commutative domain. Hence, $$0\neq \gr(a)\gr(\bar{y})=\bar{\alpha} \gr(\bar{y})\gr(a),$$
 which is a  contradiction.

Let $\theta:\mathbb{C}_{\infty}\to\mathbb{C}$ be a homomorphism.
Then we define a (non-canonical) homomorphism
$D^*:\Aut(\mathfrak{U}\mathfrak{g})\to\Aut(\mathfrak{g})$ as the composition of $D$ with the base change homomorphisms
$\theta^*:\Aut(\mathfrak{g}_{\mathbb{C}_{\infty}})\to\Aut(\mathfrak{g}).$
Next we  show that $\ker(D^*)$ contains no nontrivial finite order elements.
This implies that given a finite subgroup $\Gamma\subset\Aut(\mathfrak{U}\mathfrak{g}),$
then $\Gamma'=D^*(\Gamma)\subset \Aut(\mathfrak{g}), \Gamma\cong \Gamma'.$
Let $\phi\in\Aut(\mathfrak{U}\mathfrak{g})$ such that $\phi^m=1, \phi\neq 1.$ We may choose a finitely generated
subring $S\subset \mathbb{C}, \frac{1}{m}\in S$ containing all $m$-th roots of unity, 
such that $\phi\subset \Aut(\mathfrak{U}\mathfrak{g}_S).$
As it was shown in the preceding paragraph, by enlarging $S$ if necessary, for all $p>>0$ and
a base change $S_p\to\bold{k}$ to a field, we have
$D_{\bold{k}}(\phi)\neq \id_{\mathfrak{g}_{\bold{k}}}.$
Let $f_p(t)\in S_p[t]$ denote the characteristic polynomial of $D_{S_p}(\phi)\in \Aut(\mathfrak{g}_{S_p}).$
Put $l=\dim\mathfrak{g}.$
We  show that
$\theta(\prod_pf_p(t))\neq (t-1)^l$ in $S_{\infty}[t].$ 
Indeed, let us write $S_p=\prod_i S_{p,i}$, where 
each $S_{p,i}$ is a domain (since $p$ is unramified in $S).$
Since the image of $D_{S_p}(\phi)$ in $\Aut(\mathfrak{g}_{S_{p,i}})$ has order $ m$,
 it follows that
the image of $f_p(t)$ in  $ S_{p,i}[t]$ is not equal to $(t-1)^l$ and is of the form $\prod_{j=1}^m(t-a_j), a_j^m=1.$
Denote by $\Psi\subset S[t]$ the finite set of all degree $l$ monic polynomials not equal to $(t-1)^l,$ 
of the form $\prod_{j=1}^m(t-a_j),a_j\in S, a_j^m=1.$
For each $g\in\Psi,$ denote by $I_g$ the set of pairs $(p,i)$ for which $f_p(t)=g $ in $S_{p,i}.$
Then we have $\prod_p S_p=\prod_{g\in \Psi}(\prod_{(p,i)\in I_g} S_{p,i}).$
Now, suppose $g\in \Psi$ is such that $\prod_{(p,i)\in I_g} S_{p,i}\not\subset \ker(\theta).$
Then it follows that $\theta(\prod_pf_p(t))=\theta(g)\neq (t-1)^l.$
Hence $\phi\notin \ker(D^*)$ as desired.

\end{proof}

\end{remark}
\textbf{Acknowledgement:} I am very grateful to the anonymous referee for the numerous useful suggestions.


\begin{thebibliography}{qowq}
\bibitem [AP]{AP}

J.~Alev,  P.~Polo, {\em A rigidity theorem for finite group actions on enveloping algebras of semisimple
Lie algebras}. 
Adv. Math. 111 (1995), no. 2, 208--226.


\bibitem [BK]{BK}
A.~Belov-Kanel,  M.~ Kontsevich, {\em Automorphisms of the Weyl algebra}, Lett. Math. Phys. 74
(2005), 181--199.

\bibitem [BMR]{BMR}
R.~Bezrukavnikov, I.~Mirkovic, D.~Rumynin, {\em Localization of modules for a semisimple Lie algebra in prime characteristic},
Ann. of Math. (2) 167 (2008), no. 3, 945--991.



\bibitem [C]{C}

P.~Caldero, {\em Isomorphisms of finite invariant for enveloping algebras, semi-simple case},
 Adv. Math. 134 (1998), no. 2, 294--307.

\bibitem [CG]{CG}

P.~Caldero, G.~Perets, {\em Invariants pour l'action d'un groupe fini sur l'algèbre enveloppante d'une algèbre de Lie semi-simple},
 J. Algebra 181 (1996), no. 3, 912--925.

\bibitem [F]{F}
O.~Fleury, {\em Sur les sous-groupes finis de $Aut U(sl 2)$ et $Aut U(h)$} ,
J. Algebra 200 (1998), no. 2, 404--427


\bibitem [J1]{J1}
A.~Joseph, {\em A wild automorphism of $Usl(2)$}. 
Math. Proc. Cambridge Philos. Soc. 80 (1976), no. 1, 61--65.

\bibitem [J2]{J2}
A.~Joseph, {\em Coxeter structure and finite group action}, Algebre non commutative, groupes quantiques et invariants, 185--219, Śemin. Congr., 2, Soc. Math. France, Paris, 1997.

\bibitem [KR]{KR}

V.~Kac, A.~Radul, {\em Poisson structures for restricted Lie algebras}, The Gelfand Mathematical Seminars, 1996--1999.

\bibitem [K]{K}
M.~Kontsevich, {\em Holonomic $D$-modules and positive characteristic},  Jpn. J. Math. 4 (2009), no. 1, 1--25.

\bibitem [R]{R}

J.~Rickard, {\em Derived equivalences as derived functors}, J. London Math. Soc. (2) 43 (1991), no. 1, 37--48. 

\bibitem [Pa]{Pa}
D.~Panyushev, {\em On the coadjoint representation of $Z_2$-contractions of reductive Lie
algebras}, Adv. Math. 213 (2007), no. 1, 380--404.


\bibitem [P]{P}

P.~Polo, {\em Diagrammes de Dynkin et alg`ebres enveloppantes d’alg`ebres de Lie semi-simples}. Ann. Sci. Ecole Norm. Sup. (4) 31 (1998), no. 5, 631--657.

\bibitem[S]{S}
W.~Soergel, {\em The Hochschild cohomology ring of regular maximal primitive quotients of
enveloping algebras of semisimple Lie algebras}. Ann. Sci. Ecole
Norm. Sup. (4) 29 (1996), no. 4, 535--538.


\bibitem[T]{T}
A.~Tikaradze, {\em On the Azumaya locus of almost commutative algebras},  Proc. Amer. Math. Soc. 139 (2011), 1955--1960. 

\bibitem[H]{H}
H.~Usefi, {\em Isomorphism invariants of enveloping algebras}, Contemporary Mathematics, 634 (2015), 253--265.


\end{thebibliography}
\end{document}